\newcommand{\bbK}{\mathbb{K}}
\newcommand{\bbN}{\mathbb{N}}
\newcommand{\bbR}{\mathbb{R}}
\newcommand{\calB}{\mathcal{B}}
\newcommand{\calF}{\mathcal{F}}
\newcommand{\calL}{\mathcal{L}}
\newcommand{\calM}{\mathcal{M}}
\newcommand{\calP}{\mathcal{P}}
\newcommand{\calS}{\mathcal{S}}
\newcommand{\calU}{\mathcal{U}}
\newcommand{\eps}{\varepsilon}
\DeclareMathOperator{\id}{id} 
\DeclareMathOperator{\one}{\mathbbm{1}} 
\DeclareMathOperator{\re}{Re} 
\newcommand{\argument}{\mathord{\,\cdot\,}} 
\newcommand{\ud}{\mathrm{d}}
\DeclareMathOperator{\Ball}{B}
\DeclarePairedDelimiter{\norm}{\lVert}{\rVert} 
\DeclarePairedDelimiter{\modulus}{\lvert}{\rvert} 
\DeclarePairedDelimiter{\dual}{\langle}{\rangle} 
\DeclarePairedDelimiter{\set}{\lbrace}{\rbrace} 
\newcommand{\fm}{\mathfrak{m}}
\newcommand{\impliesProof}[2]{\ref{#1} $\Rightarrow$ \ref{#2}: }
\newcommand{\N}{\bbN}
\newcommand{\R}{\bbR}
\newcommand{\K}{\bbK}
\renewcommand{\subset}{\subseteq}
\theoremstyle{definition}
\newtheorem{defn}{Definition}[section]
\newtheorem{rem}[defn]{Remark}
\newtheorem{example}[defn]{Example}
\newtheorem{hypo}[defn]{Hypothesis}
\theoremstyle{plain}
\newtheorem{prop}[defn]{Proposition}
\newtheorem{lem}[defn]{Lemma}
\newtheorem{thm}[defn]{Theorem}
\newtheorem{cor}[defn]{Corollary}
\numberwithin{equation}{section}
\begin{document}
\title{Ultra Feller operators from a functional analytic perspective}
\author{Alexander Dobrick
\orcidlink{0000-0002-3308-3581}}
\address[A.~Dobrick]{Christian-Albrechts-Universit\"at~zu~Kiel, Arbeitsbereich~Analysis, Hein-rich-Hecht-Platz~6, 24118~Kiel, Germany}
\email{dobrick@math.uni-kiel.de}
\author{Julian H\"olz
\orcidlink{0000-0001-5058-9210}}
\address[J.\ H\"olz]{Bergische~Universit\"at~Wuppertal, Fakult\"at~f\"ur~Mathematik~und~Naturwissenschaften, Gaußstr.~20, 42119~Wuppertal, Germany}
\email{hoelz@uni-wuppertal.de}
\author{Markus Kunze
\orcidlink{0000-0001-5856-7382}}
\address[M.\ Kunze]{Universit\"at~Konstanz, Fachbereich~Mathematik~und~Statistik, Fach~193, 78357~Konstanz, Germany}
\email{markus.kunze@uni-konstanz.de}

\subjclass[2020]{46A50, 47B07}
\keywords{Strong Feller property, ultra Feller property, weak compactness}
\date{\today}
\begin{abstract}
    It is a widely acknowledged fact that the product of two positive strong Feller operators on a Polish space $E$ enjoys the ultra Feller property. 
    We present a functional analytic proof of this fact that allows us to drop the assumption that the operators are positive, and also extends the applicability of this result to more general state spaces.
    As it turns out, this result can be considered a variant of the theorem that on a Banach space with the Dunford--Pettis property, the product of two weakly compact operators is compact.
\end{abstract}

\maketitle

\section{Introduction}
\label{section:introduction}

\subsection*{Strong and ultra Feller properties}

Let $E$ be a Polish space and denote by $C_b(E)$ the continuous and bounded functions $f: E \to \bbK$, where $\bbK$ denotes either the real or the complex field. A kernel operator on $C_b(E)$ is a bounded linear operator $T \in \calL(C_b(E))$ that is given by
\begin{equation}\label{eq.kop}
    [Tf](x) = \int_E f(y) \, k(x, \ud y),
\end{equation}
where $k$ is a kernel (see Section~\ref{sect.classic}). We point out that the requirement that $T$ maps $C_b(E)$ to $C_b(E)$ implies that the measure-valued map $x\mapsto k(x, \argument)$ is continuous with respect to the $\sigma(\calM(E), C_b(E))$-topology, where $\calM(E)$ denotes the space of bounded, signed or complex Radon measures on the Borel $\sigma$-algebra of $E$. 
It should also be noted that in many applications (e.g. in probability theory) one is mainly interested in \emph{positive} kernel operators, which means that $k(x,A) \ge 0$ for all $x \in E$ and all measurable $A \subseteq E$.

In many instances, the kernel $k$ satisfies even stronger continuity properties. Following~\cite[Definition~5.7]{Revuz1984}, we say that the kernel $k$ (or the operator $T$) satisfies the \emph{strong Feller property}, if the map $x\mapsto k(x, \argument)$ is continuous in the $\sigma(\calM(E), B_b(E))$-topology, where $B_b(E)$ denotes the space of all bounded, Borel measurable functions on $E$. Moreover, it enjoys the \emph{ultra Feller property} if $x \mapsto k(x, \argument)$ is continuous with respect to the total variation norm on $\calM(E)$.

It is well known that on a Polish space $E$, a positive kernel operator $T$ enjoys the ultra Feller property if and only if it maps bounded subsets of $C_b(E)$ to sets which are relatively compact in the topology of uniform convergence on compact subsets of $E$, see~\cite[Proposition~1.5.8]{Revuz1984}. It is this compactness property which is of great interest in many applications. Nevertheless, one usually does not assume the ultra Feller property but rather the (weaker) strong Feller property. This is due to the fact that the product of two positive strong Feller operators is an ultra Feller operator (see~\cite[Theorem~1.5.9]{Revuz1984}). 
Thus, as one is often not interested in a single operator $T$, but rather in a \emph{semigroup} ${(T(t))}_{t \geq 0}$ of operators, the semigroup law implies that $T(t)$ is a strong Feller operator for all $t > 0$ if and only if it is an ultra Feller operator for all $t > 0$.

The strong Feller assumption appears in particular in the study of asymptotic behaviour of stochastic processes and transition semigroups, see for instance~\cite[Section~11.2.3]{DaPrato1992} for a classical reference in stochastic analysis and~\cite{Gerlach2014, Gerlach2023, Gerlach2012} for recent contributions from the perspective of operator theory.
The strong Feller property was also used in semigroup theory for approximation~\cite{Budde2023} and perturbation~\cite{Kuehn2022, Kunze2013} results. In~\cite{Benaim2022}, the strong Feller property plays an important role in a criterion for quasi-compactness of an operator. There has been speculation regarding whether the strong Feller property of a transition semigroup implies that the associated stochastic process can be realized with paths that are right-continuous with existing left limits (so called \emph{càdlàg paths}). This has been recently disproved in~\cite{Beznea2024, Beznea2011}.

\subsection*{Weakly compact operators and the Dunford--Pettis property} 

Let us briefly sketch the classical proof (see, e.g., \cite{Revuz1984}) that the product of two positive strong Feller operators $S$ and $T$ 
on a Polish space $E$ enjoys the ultra Feller property. Given a bounded sequence $(f_n)_n$, one typically proceeds in two steps:

\begin{enumerate}[\upshape 1.]
    \item
          One shows that one can find a subsequence $(f_{n_k})_k$ such that $S f_{n_k}$ converges pointwise (see~\cite[Lemma~1.5.10]{Revuz1984}). The main point in this step is to construct a measure $\mu$ on $E$ such that every measure $k(x, \argument)$, where $k$ denotes the kernel of $S$, is absolutely continuous with respect to this $\mu$. One can then appeal to the Banach--Alaoglu theorem in $L^\infty(E, \mu)$. We point out that this construction depends heavily on the assumption that $E$ is a Polish space.
    \item
          Subsequently, one uses the positivity of $T$ and Dini's theorem to show that if $(g_n)_n$ is a bounded sequence that converges pointwise to $g$, then $T g_n$ converges to $T g$ uniformly on compact subsets of $E$ (see~\cite[Lemma~1.5.11]{Revuz1984}).
\end{enumerate}

The second step shows that a positive strong Feller operator improves the mode of convergence and is reminiscent of the following result of Dunford and Pettis~\cite{Dunford1940}: 
If $T$ is a weakly compact operator on an $L^1$-space, it maps weakly convergent sequences to norm convergent sequences. This result motivated Grothendieck~\cite{Grothendieck1953} to call an operator between arbitrary Banach spaces $X$ and $Y$ a \emph{Dunford--Pettis operator} if it maps weakly convergent sequences to norm convergent sequences. Grothendieck also characterized Banach spaces $X$ where every weakly compact operator from $X$ to another Banach space $Y$ is a Dunford--Pettis operator; this is the case if and only if $X$ has the \emph{Dunford--Pettis property}. It follows immediately that if $X$ is a Banach space with the Dunford--Pettis property, then the product of two weakly compact operators on $X$ is compact.

\subsection*{Contributions of this article}

In this article, we extend the above two steps as follows:

As for the first step, we prove in Theorem~\ref{t.strongfellerop} that if $E$ is a completely regular Hausdorff topological space that is compactly generated, then an operator $T$ on $C_b(E)$ satisfies the strong Feller property if and only if it is $\sigma(C_b(E), \calM(E))$-compact. This generalizes
a result of Sentilles \cite{Sentilles1969, Sentilles1969a} who considered the case where $E$ is locally compact.

This result gives a functional analytic characterization of  
strong Feller operators and allows us to treat the second
step in a more general setup.
Namely, we replace the dual pair $(C_b(E), \calM(E))$ with 
a general norming dual pair $(X,Y)$ of Banach spaces
and instead of a strong Feller operator, we consider a $\sigma(X,Y)$-compact operator $T$ on $X$. In Theorem~\ref{t.dpimproving}, we prove that if $X$ has the Dunford--Pettis property
with respect to $Y$, see Definition~\ref{def.dp}, then such an operator $T$ maps a sequence that converges with respect to
$\sigma(X,Y)$ to a sequence that converges with respect to the Mackey topology $\mu(X,Y)$. This result is a generalisation of Grothendieck's result
to norming dual pairs. As $X=C_b(E)$ has the Dunford--Pettis property with respect to $Y=\calM(E)$ for every completely regular Hausdorff topological space
$E$, this also generalizes Step 2 above.

By combining these two results, we generalize the classical result that the product of strong Feller operators is ultra Feller in two directions (Theorem~\ref{t.productstrongfeller}): we show that positivity of the involved operators is not needed and, at the same time, we consider more general state spaces $E$.

As we have treated the second step in a general framework,
our abstract results apply to every norming dual pair $(X,Y)$ where
$X$ has the Dunford--Pettis property with respect to $Y$.
In Section \ref{sect.applications}, we discuss two examples
of this.
Namely, in Section~\ref{ssect.a1}, we consider the case $X=C_b(E)$ and $Y= L^1(\fm)$, where
$\fm$ is a Borel measure on $E$ satisfying suitable assumptions. As it turns out, in this case 
$\sigma(C_b(E), L^1(\fm))$-compactness of an operator $T$ is actually equivalent to \linebreak $\sigma(C_b(E), \calM(E))$-compactness, which is formally a stronger condition (see Theorem~\ref{t.cbl1sf}). In Section~\ref{ssect.a2}, we consider the case where
$E$ is locally compact and work on the norming dual pair 
$(C_0(E), \calM(E))$. In this case, $\calM(E)$ is the Banach space dual of $C_0(E)$, so we are back in the situation of Grothendieck's 
classical result. However, in this case we obtain an equivalent characterization of compact and weakly compact operators on $C_0(E)$ 
as kernel operators, see Theorem~\ref{t.compactc0}. 
This should be compared to~\cite[Section~5.3]{Ryan2002}, where a characterization of such operators via vector measures is given.

\section{Weak compactness of operators on norming dual pairs} \label{sect.weakcompactness}

\subsection{Norming dual pairs} 

We start this section by introducing the  fundamental notion of norming dual pairs. Here, $\bbK$ denotes either the real or the complex field.

\begin{defn}
\label{def.norming_pair}
    A \emph{norming dual pair} (over $\bbK$) is a triple $(X, Y, \langle \cdot, \cdot\rangle)$ consisting of two non-trivial $\bbK$-Banach spaces $X$ and $Y$ and a $\bbK$-bilinear mapping 
    \begin{align*}
        \dual{\cdot \,, \cdot} \colon X \times Y \to \bbK,
    \end{align*}
    called \emph{duality pairing between $X$ and $Y$}, such that
    \[
        \|x\| = \sup \{ |\langle x, y\rangle| : \|y\| \leq 1\} \quad \text{and} \quad \|y\| = \sup \{ |\langle x,y\rangle| : \|x\|\leq 1\}.
    \]
    We often omit $\langle\cdot \,, \cdot\rangle$ and merely say that $(X,Y)$ is a norming dual pair. To eliminate potential ambiguities, we sometimes specify the bilinear mapping by writing $\langle \cdot\,, \cdot \rangle_{X,Y}$ instead of $\langle \cdot\,, \cdot \rangle$. In what follows, we write $\Ball_Z$
    for the closed unit ball of a Banach space $Z$.
\end{defn}

Due to the symmetry of the definition, $(X, Y)$ is a norming dual pair if and only if $(Y, X)$ is a norming dual pair. Moreover, if $(X, Y)$ is a norming dual pair, then $Y$ can be identified with a closed subspace of the norm dual $X^\ast$ of $X$ via the continuous embedding
\begin{align*}
    Y \to X^\ast, \quad y \mapsto \dual{\cdot \, ,y}.
\end{align*}
Conversely, if $X$ is a Banach space and $Y$ is a subspace of $X^\ast$, then $(X, Y)$ is a norming dual pair if and only if $Y$ is a closed, norming subspace of $X^\ast$. \smallskip

If $(X, Y)$ is a norming dual pair, then one can consider several standard locally convex topologies defined in terms of the duality (see \cite[Chapter~IV]{Schaefer1999}). In particular, on $X$ one might consider the topology $\sigma(X,Y)$ of pointwise convergence on $Y$ and the \emph{Mackey topology} $\mu(X,Y)$ of uniform convergence on absolutely convex, $\sigma(Y,X)$-compact subsets of $Y$.\smallskip

Recall that a locally convex topology $\tau$ on $X$ is called \emph{consistent} with the norming dual pair $(X,Y)$ if the dual space $(X, \tau)'$ coincides with $Y$,
i.e., the $\tau$-continuous linear functional are precisely those of the form $\langle \argument , y \rangle$ with $y \in Y$. 
The Mackey--Arens theorem \cite[Theorem~IV.3.2]{Schaefer1999} implies that any locally convex topology that is consistent with $(X,Y)$ is finer than $\sigma(X,Y)$ and coarser than $\mu(X,Y)$.
In particular, the Mackey topology is the finest locally convex topology on $X$ which is consistent with 
the norming dual pair $(X, Y)$. 
\smallskip

The following proposition, which generalizes \cite[Lemma~A.1]{Gerlach2023}, is a direct consequence of the bipolar theorem and will be useful later on. 

\begin{prop} \label{p.dense}
    Let $(X, Y)$ be a norming dual pair and $Y_0 \subset Y$ be a closed subspace. Suppose that $(X, Y_0)$ is also a norming dual pair. Then the closed unit ball of $Y_0$ is $\sigma(Y,X)$-dense in the closed unit ball of $Y$.
\end{prop}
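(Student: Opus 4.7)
The plan is to apply the bipolar theorem directly to the set $\Ball_{Y_0}$ viewed as a subset of $Y$ equipped with the $\sigma(Y,X)$-topology. Recall that in this duality, the polar of a set $A \subseteq Y$ is taken in $X$, and the bipolar theorem states that $A^{\circ\circ}$ equals the $\sigma(Y,X)$-closed absolutely convex hull of $A \cup \{0\}$.

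First I would compute $\Ball_{Y_0}^\circ \subseteq X$. By definition,
\[
    \Ball_{Y_0}^\circ = \set{x \in X \suchthat |\langle x, y\rangle| \leq 1 \text{ for all } y \in \Ball_{Y_0}}.
\]
Because $(X, Y_0)$ is assumed to be a norming dual pair, the norm on $X$ is recovered as $\|x\| = \sup\set{|\langle x, y\rangle| \suchthat y \in \Ball_{Y_0}}$, so $\Ball_{Y_0}^\circ = \Ball_X$. Next, I would compute the bipolar: since $(X, Y)$ is also a norming dual pair, the same calculation in the roles of $X$ and $Y$ gives
\[
    \Ball_{Y_0}^{\circ\circ} = \Ball_X^\circ = \Ball_Y.
\]

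By the bipolar theorem (applied in the duality $(Y, X)$), $\Ball_{Y_0}^{\circ\circ}$ is the $\sigma(Y,X)$-closed absolutely convex hull of $\Ball_{Y_0}$. Since $\Ball_{Y_0}$ is already absolutely convex and contains the origin, this closed absolutely convex hull coincides with its $\sigma(Y,X)$-closure. Combining both computations yields that the $\sigma(Y,X)$-closure of $\Ball_{Y_0}$ equals $\Ball_Y$, which is exactly the claim.

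There is no real obstacle here: the proof is essentially a bookkeeping of polars, and the only thing to watch is to keep straight in which space each polar lives and to exploit the norming condition on both pairs $(X, Y)$ and $(X, Y_0)$ symmetrically.
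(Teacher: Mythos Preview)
Your proof is correct and follows essentially the same route as the paper: compute the polar of $\Ball_{Y_0}$ in $X$ using that $(X,Y_0)$ is norming, compute the polar of $\Ball_X$ in $Y$ using that $(X,Y)$ is norming, and invoke the bipolar theorem to identify $\Ball_Y$ with the $\sigma(Y,X)$-closure of $\Ball_{Y_0}$. The only cosmetic difference is that the paper works with the real polar and observes it agrees with the absolute polar because the unit ball is balanced, whereas you use absolute polars from the start.
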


\begin{proof}
    The polar of $\Ball_{Y_0}$ in $X$ with respect to $Y$ is defined as
    \begin{align*}
        (\Ball_{Y_0})^\circ
        &=
        \{x \in X:  \re \langle x, y \rangle \le 1  \text{ for all } y \in \Ball_{Y_0} \} \\
        &= \{x \in X:  \lvert \langle x, y \rangle \rvert \le 1  \text{ for all } y \in \Ball_{Y_0} \},
    \end{align*}
    as the unit ball is balanced, so that the polar and the absolute polar coincide. 
    Since the dual pair $(X, Y_0)$ is norming, this set equals the closed unit ball $\Ball_X$ in $X$.
    Similarly, as $(X, Y)$ is norming, the polar of $\Ball_X$ in $Y$ with respect to $X$, which is defined as
    \begin{align*}
        (\Ball_X)^\circ
        &=
        \{y \in Y:  \re \langle x, y \rangle \le 1  \text{ for all } x \in \Ball_X \} \\
        &= \{y \in Y:  \lvert \langle x, y \rangle \rvert \le 1  \text{ for all } x \in \Ball_X \},
    \end{align*}
    equals the closed unit ball $\Ball_Y$ in $Y$. Hence, the bipolar $(\Ball_{Y_0})^{\circ\circ}$ of $\Ball_{Y_0}$ in the dual pair $(X, Y)$ is equal to $\Ball_Y$. But as $\Ball_{Y_0}$ is convex and contains $0$, the bipolar theorem (see e.g.~\cite[p.~126]{Schaefer1999}) implies that the bipolar $(\Ball_{Y_0})^{\circ\circ}$ is the $\sigma(Y,X)$-closure of $\Ball_{Y_0}$ in $Y$.
\end{proof}

\subsection{Weakly continuous and weakly compact operators}

Next, we consider continuous linear operators in the context of norming dual pairs. In what follows, we refer to a linear mapping between vector spaces as an \emph{operator}. On locally convex spaces, we often study operators continuous in specific topologies. We denote the space of \emph{norm-continuous operators on $X$} by $\calL(X)$ and, for a locally convex topology $\tau$ on $X$, we denote the space of \emph{$\tau$-continuous operators on $X$} by $\calL(X, \tau)$.  
If $T\in \calL(X)$, we denote its \emph{$\|\argument\|$-adjoint} by $T^*: X^* \to X^*$ whereas for $T\in \calL(X, \sigma(X,Y))$ we denote its $\sigma(X,Y)$-adjoint by $T': Y \to Y$, which is uniquely determined by the relation
\begin{align*}
    \langle Tx, y \rangle = \langle x, T'y \rangle
\end{align*}
for all $x \in X$ and all $y \in Y$.
Let us collect some results concerning the continuity of operators on norming dual pairs.

\begin{prop} \label{p.weaklycontinuous}
    Let $(X, Y)$ be a norming dual pair and let $T \colon X \to X$ be a linear map. Then the following assertions hold: 
    \begin{enumerate} 
    [\upshape (a)]
    \item A subset of $X$ is $\sigma(X,Y)$-bounded if and only if it is norm bounded. Consequently, if 
    $T$ is $\sigma(X,Y)$-continuous, then $T$ is $\|\argument\|$-continuous.
    \item $T$ is $\sigma(X,Y)$-continuous if and only if it is $\mu(X,Y)$-continuous if and only if it is 
    $\tau$-continuous for any locally convex topology that is consistent with the pairing $(X,Y)$.
    \item For an operator $T\in \calL(X)$, the following are equivalent:
    \begin{enumerate}[\upshape (i)]
        \item \label{i.p.weaklycontinuous.TWeaklycontinuous}
            $T \in \calL(X, \sigma(X, Y))$.
        \item \label{i.p.weaklycontinuous.TStarLeavesYInvariant} 
            $T \in \calL(X)$ and $T^\ast Y \subset Y$. 
    \end{enumerate}
    In this case, $T' = T^\ast \rvert_{Y}$ and $\norm{T}_{\calL(X)} = \norm{T'}_{\calL(Y)}$. 
    \end{enumerate}
\end{prop}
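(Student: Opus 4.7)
First, for part (a), my plan is to invoke the uniform boundedness principle on the Banach space $Y$: each $x \in X$ acts as a bounded linear functional on $Y$ via $y \mapsto \langle x, y \rangle$, and a subset $A \subset X$ is $\sigma(X,Y)$-bounded precisely when this family of functionals is pointwise bounded on $Y$. Uniform boundedness then produces a constant $M$ with $\lvert\langle x, y\rangle\rvert \le M\norm{y}$ for all $x \in A$ and all $y \in Y$, and the norming property of $(X,Y)$ upgrades this to the norm bound $\norm{x} \le M$ on $A$. The converse implication is immediate. For the ``consequently'' clause I would appeal to the general principle that continuous linear maps between locally convex spaces send bounded sets to bounded sets: $T(\Ball_X)$ is $\sigma(X,Y)$-bounded, and hence norm-bounded by what was just shown.

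For part (b), I would first note that the Mackey--Arens theorem, already cited in the text, sandwiches every consistent topology $\tau$ between $\sigma(X,Y)$ and $\mu(X,Y)$, so only the implication ``$\sigma(X,Y)$-continuous $\Rightarrow$ $\mu(X,Y)$-continuous'' requires work. My plan is to route this through the adjoint: a $\sigma(X,Y)$-continuous operator $T$ has a $\sigma(Y,X)$-continuous adjoint $T' \colon Y \to Y$, which therefore carries absolutely convex $\sigma(Y,X)$-compact subsets of $Y$ into subsets of the same type. The identity $T^{-1}(K^\circ) = (T' K)^\circ$ then shows that the preimage of each basic Mackey neighborhood of zero is again a basic Mackey neighborhood of zero, so $T$ is $\mu(X,Y)$-continuous.

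For part (c), the crucial observation is that, under the embedding $Y \hookrightarrow X^\ast$, the functional $T^\ast y \in X^\ast$ is just $x \mapsto \langle Tx, y\rangle$. Since $(X, \sigma(X,Y))' = Y$, this functional lies in $Y$ if and only if it is $\sigma(X,Y)$-continuous in $x$, which is the heart of the equivalence (i) $\Leftrightarrow$ (ii). The identification $T' = T^\ast\restricted{Y}$ is then forced by the defining relation of $T'$. Finally, for the norm equality I would chain the two norming identities and interchange the suprema:
\[
\norm{T} = \sup_{x \in \Ball_X}\sup_{y \in \Ball_Y}\lvert\langle Tx, y\rangle\rvert = \sup_{y \in \Ball_Y}\sup_{x \in \Ball_X}\lvert\langle x, T'y\rangle\rvert = \sup_{y \in \Ball_Y}\norm{T'y} = \norm{T'}.
\]
I expect no genuine obstacle: the only substantive ingredient is the uniform boundedness principle in (a), and the remainder is bookkeeping with the definitions of the weak and Mackey topologies and their adjoints.
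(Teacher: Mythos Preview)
Your proposal is correct and in fact supplies the details that the paper omits: for (a) the paper invokes the uniform boundedness principle exactly as you do, and for (b) and (c) it simply cites \cite[IV.7.4]{Schaefer1999} and \cite[Proposition~3.1(i)]{Kunze2011} without further argument, so your sketches of (a) and (c) are precisely what those references unpack.

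One minor caveat concerning (b): the sandwich $\sigma(X,Y) \subset \tau \subset \mu(X,Y)$ by itself does \emph{not} reduce the full statement to the single implication $\sigma\Rightarrow\mu$. Your polar argument $T^{-1}(K^\circ)=(T'K)^\circ$ correctly gives $\mu$-continuity, because $T'K$ is again absolutely convex and $\sigma(Y,X)$-compact and \emph{every} such set contributes a basic $\mu$-neighborhood. For an arbitrary consistent $\tau$, however, the basic neighborhoods are polars of sets from some fixed family $\mathfrak{S}$, and there is no reason $T'K$ should land back in $\mathfrak{S}$; so the direction ``$\sigma$-continuous $\Rightarrow$ $\tau$-continuous'' does not drop out of your argument. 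The easy direction ``$\tau$-continuous $\Rightarrow$ $\sigma$-continuous'' (continuous linear maps are weakly continuous, and the weak topology of $(X,\tau)$ is $\sigma(X,Y)$ since $\tau$ is consistent) together with your $\sigma\Leftrightarrow\mu$ is all that the rest of the paper actually uses, and the remaining clause is exactly what the Schaefer reference is there to cover.
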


\begin{proof}
    (a): Clearly, a norm bounded subset is $\sigma(X,Y)$-bounded. Since $Y$ is norming for $X$, the converse follows from the uniform boundedness principle.

    (b): This is \cite[IV.7.4]{Schaefer1999}.

    (c): This is \cite[Proposition 3.1(i)]{Kunze2011}.
\end{proof}

We continue by considering various weak compactness properties for sets and operators.
Recall that a point $x$ in a topological space $\Omega$ is called an \emph{accumulation point} or \emph{limit point} of a set $A\subseteq \Omega$ 
if every neighbourhood of $x$ intersects $A \setminus \{x\}$.
A subset $C$ of a topological space $\Omega$ is called \emph{(relatively) sequentially compact} if every sequence in $C$ has a subsequence that converges to a point in $C$ (in $\Omega$). Similarly, $C$ is called \emph{(relatively) countably compact} if every sequence in $C$ has a subnet that converges to a point in $C$ (in $\Omega$); it is not difficult to see that $C$ is (relatively) countably compact if and only if every  infinite subset of $C$ has an accumulation point in $C$ (in $\Omega$).
Note that relative countable/sequential compactness of a set does not imply countable/sequential compactness of its closure, see, e.g.,~\cite[Exercise 2.15~on~p.~161]{Megginson1998}.

Using the above notions, we define compactness properties for $\sigma(X,Y)$-continuous operators on norming dual pairs. We call an operator $T \in \calL(X, \sigma(X,Y))$ 
\emph{$\sigma(X,Y)$-compact} if it maps bounded subsets of $X$ to relatively $\sigma(X,Y)$-compact subsets of $X$.
Likewise, $T$ is called \emph{sequentially $\sigma(X,Y)$-compact} if it maps bounded subsets of $X$ to relatively sequentially $\sigma(X,Y)$-compact subsets.
If $T$ maps bounded subsets of $X$ to 
relatively countably $\sigma(X,Y)$-compact subsets, then $T$
is called \emph{countably $\sigma(X,Y)$-compact}.

Gantmacher~\cite{Gantmacher1940} provided a characterization of weakly compact operators on Banach spaces (see also \cite[Theorem~5.23]{Aliprantis2006a}). This result can
be generalized to topological vector spaces \cite[Proposition~II.9.4]{Schaefer1974}. 
In the context of norming dual pairs, one can prove the following version of Gantmacher's result.

\begin{prop} \label{p.weakcomp}
    Let $(X,Y)$ be a norming dual pair and $T \in \calL(X,\sigma(X,Y))$. Set $S \coloneqq T' \in \calL(Y,\sigma(Y,X))$. Then the following assertions are equivalent:
    \begin{enumerate}[\upshape (i)]
        \item \label{i.p.weakcomp.TWeakComp}
            $T$ is $\sigma(X,Y)$-compact.
        \item \label{i.p.weakcomp.SLeavesYInvariant}
            $S^*Y^*\subset X$.
        \item \label{i.p.weakcomp.SContinuousToNorm}
            $S$ is continuous as an operator from $(Y, \mu(Y,X))$ to $(Y, \| \argument \|_Y)$.
        \item \label{i.p.weakcomp.SContinuousToWeak}
            $S$ is continuous as an operator from $(Y, \mu(Y,X))$ to $(Y, \sigma(Y, Y^*))$.
    \end{enumerate}
\end{prop}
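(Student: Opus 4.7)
The plan is to run the cycle (i) $\Leftrightarrow$ (ii) $\Leftrightarrow$ (iv), (ii) $\Rightarrow$ (iii), (iii) $\Rightarrow$ (iv), relying throughout on the identification $T = S^{*}\rvert_{X}$ obtained by viewing $X$ as a closed norming subspace of $Y^{*}$ via $x \mapsto \langle x, \argument \rangle$. First I would establish this identity by unwinding the defining relations $\langle Tx, y \rangle = \langle x, Sy \rangle$ and $\langle y, S^{*}\eta \rangle_{Y,Y^{*}} = \langle Sy, \eta \rangle_{Y,Y^{*}}$; this lets me reduce assertions about $T$ to assertions about $S^{*}$.

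The main obstacle will be (i) $\Leftrightarrow$ (ii). The plan here is to identify the $\sigma(Y^{*},Y)$-closure of $T(\Ball_{X}) = S^{*}(\Ball_{X})$ with the absolutely convex, $\sigma(Y^{*},Y)$-compact set $S^{*}(\Ball_{Y^{*}})$. One inclusion is immediate: $S^{*}$ is $\sigma(Y^{*},Y)$-continuous (as a Banach adjoint), so by the Banach--Alaoglu theorem $S^{*}(\Ball_{Y^{*}})$ is $\sigma(Y^{*},Y)$-compact, hence closed, and it clearly contains $S^{*}(\Ball_{X})$. For the reverse inclusion I would apply Proposition~\ref{p.dense} to the norming dual pair $(Y, Y^{*})$ with closed norming subspace $X$ to obtain that $\Ball_{X}$ is $\sigma(Y^{*},Y)$-dense in $\Ball_{Y^{*}}$, and then transport this density through $S^{*}$. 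Since $\sigma(X,Y)$ is the restriction of $\sigma(Y^{*},Y)$ to $X$, the operator $T$ is $\sigma(X,Y)$-compact iff this closure lies in $X$, iff $S^{*}(\Ball_{Y^{*}}) \subseteq X$, iff $S^{*}Y^{*} \subseteq X$.

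The remaining equivalences are lighter. For (ii) $\Leftrightarrow$ (iv), I would invoke the Mackey--Arens theorem: the $\mu(Y,X)$-continuous linear functionals on $Y$ are exactly those of the form $\langle \argument, x \rangle$ with $x \in X$. Since $y \mapsto \langle Sy, \xi \rangle = \langle y, S^{*}\xi \rangle_{Y,Y^{*}}$, demanding that this functional be $\mu(Y,X)$-continuous for every $\xi \in Y^{*}$ is exactly demanding $S^{*}\xi \in X$ for every $\xi$. For (ii) $\Rightarrow$ (iii), the set $A \coloneqq S^{*}(\Ball_{Y^{*}})$ is absolutely convex and $\sigma(Y^{*},Y)$-compact, so under (ii) it is an absolutely convex, $\sigma(X,Y)$-compact subset of $X$. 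Its associated Mackey seminorm satisfies
\[
    p_{A}(y) = \sup_{x \in A} \lvert \langle y, x \rangle \rvert = \sup_{\xi \in \Ball_{Y^{*}}} \lvert \langle Sy, \xi \rangle \rvert = \| Sy \|,
\]
so $y \mapsto \|Sy\|$ is $\mu(Y,X)$-continuous, which yields (iii). Finally, (iii) $\Rightarrow$ (iv) is immediate since $\sigma(Y,Y^{*})$ is coarser than the norm topology, and this closes the cycle.
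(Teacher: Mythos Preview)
Your proposal is correct and follows essentially the same route as the paper's proof: both hinge on Proposition~\ref{p.dense} to obtain $\sigma(Y^{*},Y)$-density of $\Ball_X$ in $\Ball_{Y^{*}}$, on the $\sigma(Y^{*},Y)$-compactness of $S^{*}(\Ball_{Y^{*}})$, and on the Mackey--Arens identification of the dual of $(Y,\mu(Y,X))$ with $X$. The only differences are cosmetic: for (i)$\Leftrightarrow$(ii) you identify the $\sigma(Y^{*},Y)$-closure of $T(\Ball_X)$ with $S^{*}(\Ball_{Y^{*}})$ in one stroke, whereas the paper argues with nets and subnets; and for (ii)$\Rightarrow$(iii) you phrase the argument via the Mackey seminorm $p_{A}$ associated with $A=S^{*}(\Ball_{Y^{*}})$, while the paper uses the polar $A^{\circ}$ --- these are the same observation in dual language.
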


\begin{proof}
    Before beginning, we note that $S^*$ is a $\sigma(Y^*, Y)$-continuous operator that extends $T$. 

    \impliesProof{i.p.weakcomp.TWeakComp}{i.p.weakcomp.SLeavesYInvariant}
        By Proposition~\ref{p.dense}, applied to the norming dual pairs $(Y^*,Y)$ and $(X,Y)$, 
        $\Ball_X$ is $\sigma(Y^*,Y)$-dense in $\Ball_{Y^*}$. Thus, for each $y^* \in Y^*$, there exists a net $(x_\lambda)_\lambda$ in $\Ball_X$ that converges to $y^*$ with respect to $\sigma(Y^*,Y)$. By (i), there is a subnet $(x_\mu)_\mu$ such that the net $(T x_\mu)_\mu$ converges to some $x \in X$ with respect to $\sigma(X, Y)$. It follows that
        \[
            S^*y^* = \lim_\mu S^*x_\mu = \lim_\mu Tx_\mu = x \in X.
        \]

    \impliesProof{i.p.weakcomp.SLeavesYInvariant}{i.p.weakcomp.SContinuousToNorm}
        Clearly, $S^*: Y^* \to X$ is $\sigma(Y^*,Y)$-to-$\sigma(X,Y)$-continuous. As $\Ball_{Y^*}$ is $\sigma(Y^*,Y)$-compact
        by the Banach--Alaoglu theorem, the set $C \coloneqq S^*\Ball_{Y^*}$ is $\sigma(X,Y)$-compact.
        
        It follows that the polar $C^\circ$ is a neighbourhood of $0$ for the $\mu(Y,X)$-topology.
        As $C$ is balanced, $C^\circ$ coincides with its absolute polar. Thus, $y\in C^\circ$ if and only if
        \[
        |\langle y^*, Sy\rangle_{Y^*,Y}| = |\langle S^*y^*, y\rangle_{X,Y}| \leq 1
        \]
        for all $y^*\in \Ball_{Y^*}$. This implies $\|Sy\|\leq 1$ for all $y\in C^\circ$, whence
        $SC^\circ \subset \Ball_Y$, proving (iii). \smallskip

    \impliesProof{i.p.weakcomp.SContinuousToNorm}{i.p.weakcomp.SContinuousToWeak}
        This is an immediate consequence of the continuity of the embedding $(Y, \|\argument\|_Y) \hookrightarrow (Y, \sigma(Y, Y^*))$. \smallskip

    \impliesProof{i.p.weakcomp.SContinuousToWeak}{i.p.weakcomp.SLeavesYInvariant}
        Let $y^* \in Y^*$. Then~(iv) implies that the functional $S^* y^* \in Y^*$ is a continuous map from $(Y, \mu(Y,X))$ to the scalar field, so $S^*y^*$ is in the topological dual space of $(Y, \mu(Y,X))$, which is $X$. \smallskip

    \impliesProof{i.p.weakcomp.SLeavesYInvariant}{i.p.weakcomp.TWeakComp}
        The set $C$ from the proof of the implication (ii) $\Rightarrow$ (iii) is $\sigma(X,Y)$-compact and contains $T\Ball_X$. This proves (i).
\end{proof}

\begin{rem}
    Among other things, Gantmacher proved in \cite{Gantmacher1940}
    that a bounded operator on a Banach space $X$ is $\sigma(X, X^*)$-compact 
    if and only if its adjoint $T^*$ is $\sigma(X^*, X^{**})$-compact. 
    Nowadays, this result is often referred to as Gantmacher's theorem. 
    In the setting of norming dual pairs, this equivalence is not true in general, 
    i.e., we can have a $\sigma(X,Y)$-compact operator on $X$ whose adjoint $T'$ is not $\sigma(Y, X)$-compact. 
    
    This is easily seen by considering the norming dual pair $(Y^*, Y)$, i.e., choosing $X = Y^*$. In this situation, every $\sigma(Y^*, Y)$-continuous operator $T$ on $X$ is $\sigma(Y^*, Y)$-compact, but its $\sigma(Y^*,Y)$-adjoint $S$ is not necessarily $\sigma(Y,Y^*)$-compact: For instance, one can consider a non-reflexive Banach space $Y$ and let $T$ be the identity operator on $X=Y^*$. 
\end{rem}

In what follows, it will be crucial to determine in which situations the notions of compactness, sequential compactness and countable compactness of an operator coincide. For convenience, we recall the classical result of Eberlein--\v{Smulian} which asserts that this is the case for 
Banach spaces $X$ endowed with the weak topology $\sigma(X, X^*)$.

\begin{prop}
    \label{p.es}
    Let $X$ be a Banach space and let $A \subseteq X$. Then the following assertions are equivalent:
    \begin{enumerate}
        [\upshape (i)]
        \item $A$ is (relatively) $\sigma(X, X^*)$-compact;
        \item Every sequence in $A$ has a subsequence that $\sigma(X, X^*)$-converges to an element of $A$ (resp.\ an element of $X$);
        \item Every sequence in $A$ has a subnet that $\sigma(X, X^*)$-converges to an element of $A$ (resp.\ an element of $X$).
    \end{enumerate}
\end{prop}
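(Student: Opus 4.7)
The plan is to prove the chain (i) $\Rightarrow$ (ii) $\Rightarrow$ (iii) $\Rightarrow$ (i). The middle implication is immediate, since every subsequence is a subnet. The two substantial steps are the \emph{Šmulian direction} (i) $\Rightarrow$ (ii) and the \emph{Eberlein direction} (iii) $\Rightarrow$ (i); both are classical and I would follow the standard approach via the canonical embedding $J \colon X \to X^{**}$ and Goldstine's theorem.

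For (i) $\Rightarrow$ (ii), I would argue via metrizability. Given a sequence $(a_n)$ in $A$, set $X_0 \coloneqq \overline{\linSpan}\set{a_n : n \in \bbN}$ and $A_0 \coloneqq \overline{A}^{\sigma(X, X^*)} \cap X_0$. Since $X_0$ is norm closed and convex, it is weakly closed by Mazur's theorem, so $A_0$ is $\sigma(X, X^*)$-compact in the separable Banach space $X_0$. By Banach--Alaoglu and the separability of $X_0$, the closed unit ball of $X_0^*$ is $\sigma(X_0^*, X_0)$-compact and metrizable. Picking a countable $\sigma(X_0^*, X_0)$-dense subset $(z_n^*) \subset \Ball_{X_0^*}$, the formula
\[
  d(x, y) \coloneqq \sum_{n=1}^\infty 2^{-n} \modulus{\dual{x-y, z_n^*}}
\]
defines a metric on $X_0$ (by Hahn--Banach) whose induced topology is Hausdorff and coarser than $\sigma(X_0, X_0^*)$. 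On the compact set $A_0$ these comparable Hausdorff topologies must coincide, so $A_0$ is weakly metrizable. As a compact metric space is sequentially compact, $(a_n)$ admits a weakly convergent subsequence, with limit in $\overline{A}^{\sigma(X, X^*)}$ (and in $A$ if $A$ itself is weakly compact).

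For (iii) $\Rightarrow$ (i), I would work in the bidual. First, $A$ is norm bounded: for each $x^* \in X^*$, every sequence in $x^*(A)$ has a convergent subnet and is therefore bounded, so norm-boundedness of $A$ follows from the uniform boundedness principle. Under the embedding $J$, the set $J(A)$ has $\sigma(X^{**}, X^*)$-compact closure $K$ by Banach--Alaoglu. The key assertion is $K \subset J(X)$, because then the $\sigma(X^{**}, X^*)$-topology on $K$ coincides with the weak topology on $J^{-1}(K) \subset X$, yielding $\sigma(X, X^*)$-compactness of $\overline{A}^{\sigma(X, X^*)}$. Given $x^{**} \in K$, one inductively constructs $(a_n) \subset A$ and $(x_n^*) \subset \Ball_{X^*}$ so that, firstly, each $x_n^*$ nearly norms $x^{**} - J(a_k)$ on the finite-dimensional span of the earlier iterates and, secondly, $J(a_n)$ approximates $x^{**}$ on $x_1^*, \dots, x_{n-1}^*$ (possible because $x^{**}$ lies in the $\sigma(X^{**}, X^*)$-closure of $J(A)$). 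By hypothesis~(iii), $(a_n)$ has a $\sigma(X, X^*)$-cluster point $x \in X$, and the interleaved construction forces $J(x) = x^{**}$.

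The main obstacle is the inductive construction in the Eberlein step: the functionals $x_n^*$ and points $a_n$ must be chosen in lockstep so that the eventual cluster point $x$ is pinned down to satisfy $J(x) = x^{**}$ in the bidual. Concretely, this requires finite-dimensional Hahn--Banach choices of $x_n^*$ combined with Goldstine-type density arguments for the $a_n$, orchestrated so that the differences $x^{**} - J(a_k)$ are simultaneously controlled on the growing family of test functionals and no slack survives in the passage to the limit.
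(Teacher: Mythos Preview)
Your sketch is correct and follows the standard route to Eberlein--\v{S}mulian (the \v{S}mulian direction via separable reduction and metrizability of weakly compact subsets of separable spaces, and the Eberlein direction via the Whitley-type interleaved construction in the bidual). The paper, however, does not prove this proposition at all: it simply cites \cite[Theorem~3.40]{Aliprantis2006a} and moves on, since the result is classical and only used as a tool. So there is no meaningful comparison of approaches to make --- you have supplied a self-contained argument where the paper is content with a reference. One minor remark: in your boundedness step for (iii)~$\Rightarrow$~(i) you pass through ``every sequence in $x^*(A)$ has a convergent subnet'', which is fine, but you could equally well note directly that any weak cluster point of a sequence $(a_n)$ with $\lvert x^*(a_n)\rvert \to \infty$ would be impossible, shortcutting the scalar detour.
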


\begin{proof}
    See \cite[Theorem 3.40]{Aliprantis2006a}.
\end{proof}

We recall that a topological space $\Omega$ is called \emph{angelic} if for every relatively countably compact subset $A$ of $\Omega$ the following 
hold: (i) $A$ is relatively compact and (ii) every point of $\overline{A}$ is the limit of a sequence contained in $A$.
The question whether a Banach space $X$ endowed with certain
weak topologies is angelic has received quite some attention (see \cite{Cascales1998} for the case $X = C(K)$, \cite{Cascales1995} for general Banach spaces,
where the topology is induced by a norming subset of the dual space).

We next present some generalizations of the Eberlein--\v{S}mulian result to locally convex space. 
We recall that a locally convex space $(X, \tau)$ is called \emph{quasi-complete} if every bounded and closed subset of $X$ is complete.

\begin{lem} \label{l.eberlein_smulian}
    Let $(X,Y)$ be a norming dual pair and $T \in \calL(X)$. Then the following assertions hold:
    \begin{enumerate}[\upshape (a)]
        \item \label{i.l.eberlein_smulian.smulian}
            Suppose that $Y$ is separable with respect to $\sigma(Y,X)$ (equivalently, that $Y$ contains a sequence that separates the points of $X$). Then the operator $T$ is sequentially $\sigma(X,Y)$-compact if and only if it is countably $\sigma(X,Y)$-compact if and only if it is
        $\sigma(X,Y)$-compact.
        
        \item \label{i.l.eberlein_smulian.eberlein}
            Suppose that $X$ equipped with the Mackey topology $\mu(X,Y)$ is quasi-complete and let $\sigma(X,Y) \subset \tau \subset \mu(X,Y)$ be a locally convex topology on $X$. Then $T$ is countably $\tau$-compact if and only if it is $\tau$-compact.
    \end{enumerate}
\end{lem}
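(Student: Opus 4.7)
For part (a), the plan is to verify that $\sigma(X,Y)$ is angelic on norm-bounded subsets of $X$ under the separability hypothesis, after which the three compactness notions coincide by the classical equivalence in angelic spaces. Using a countable sequence $(y_n) \subset Y$ that separates the points of $X$ (which exists by the hypothesis), I would argue as follows: given a sequence $(f_n)$ in a norm-bounded subset of $X$, a diagonal argument extracts a subsequence $(f_{n_j})$ along which $\langle f_{n_j}, y_k\rangle$ converges for each $k$. Any $\sigma(X,Y)$-cluster point of $(f_{n_j})$ in $X$ is then uniquely determined by its values on the separating family, and a short ``every subnet has a subsubnet converging to the same point'' argument promotes this cluster point to a genuine sequential $\sigma(X,Y)$-limit of $(f_{n_j})$. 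This shows that $\sigma(X,Y)$ is angelic on bounded subsets, whence relatively $\sigma(X,Y)$-compact, relatively sequentially $\sigma(X,Y)$-compact, and relatively countably $\sigma(X,Y)$-compact sets coincide; applied to $T(B)$ for a bounded $B \subseteq X$, this yields part (a).

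For part (b), the direction ``$\tau$-compact $\Rightarrow$ countably $\tau$-compact'' is immediate, so I focus on the converse. Fixing a bounded $B \subseteq X$ and reducing to $B = \Ball_X$, the set $A := T(B)$ is convex and bounded. The strategy is to first obtain $\sigma(X,Y)$-compactness of $A$ via Eberlein's theorem, and then lift this to $\tau$-compactness via a precompactness-plus-completeness argument on the $\tau$-closure.

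Since $\sigma(X,Y) \subset \tau$, countable $\tau$-compactness of $A$ forces countable $\sigma(X,Y)$-compactness: every $\tau$-cluster point is a $\sigma(X,Y)$-cluster point. By Mackey--Arens, $Y$ is the topological dual of $(X, \mu(X,Y))$, so $\sigma(X,Y)$ is precisely the weak topology of this quasi-complete locally convex space, and Eberlein's theorem yields that $A$ is relatively $\sigma(X,Y)$-compact. Setting $C := \overline{A}^{\sigma}$, the set $C$ is $\sigma$-compact and convex, and Mackey--Arens gives $C = \overline{A}^{\tau} = \overline{A}^{\mu}$.

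To conclude $\tau$-compactness of $C$, I would verify $\tau$-precompactness and $\tau$-completeness separately. For precompactness, relatively countably compact subsets of a uniform space are always totally bounded, so $A$ (and hence its $\tau$-closure $C$) is $\tau$-precompact. For completeness, I take a $\tau$-Cauchy net in $C$, observe that it is in particular $\sigma$-Cauchy and therefore $\sigma$-converges to some $x \in C$ (by $\sigma$-compactness of $C$), and then use that the closed absolutely convex $\tau$-neighborhoods of $0$ form a basis and are $\sigma$-closed by Mackey--Arens to promote this $\sigma$-limit to a genuine $\tau$-limit of the Cauchy net. The main obstacle I foresee is precisely this last promotion, which rests on a careful translation-invariance argument exploiting the consistency of $\tau$ with the pairing; once in place, $\tau$-precompactness together with $\tau$-completeness give $\tau$-compactness of $C$, and hence relative $\tau$-compactness of $T(B)$.
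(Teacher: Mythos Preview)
Your treatment of part~(b) is correct and is essentially a detailed reconstruction of the Eberlein-type theorem that the paper simply cites from K\"othe. The promotion step you flag as an obstacle works exactly as you outline: closed absolutely convex $\tau$-neighbourhoods of $0$ are $\sigma(X,Y)$-closed by consistency, so a $\tau$-Cauchy net with a $\sigma$-limit in $C$ actually $\tau$-converges to that limit. Combined with the total boundedness coming from relative countable $\tau$-compactness, this gives $\tau$-compactness of $C$.

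Part~(a), however, has a genuine gap. Your diagonal-plus-uniqueness argument shows that a bounded, relatively countably $\sigma(X,Y)$-compact set is relatively \emph{sequentially} $\sigma(X,Y)$-compact. This is not angelicity: angelicity requires in particular that relatively countably compact sets are relatively \emph{compact}, and you have not established the implication ``relatively sequentially (or countably) $\sigma(X,Y)$-compact $\Rightarrow$ relatively $\sigma(X,Y)$-compact''. You cannot borrow your part~(b) argument here, since quasi-completeness of $(X,\mu(X,Y))$ is not assumed in~(a). Nor does the obvious compactification via $X\hookrightarrow Y^*$ and Banach--Alaoglu work directly: the separating sequence $(y_n)$ need not separate points of $Y^*$, so a $\sigma(Y^*,Y)$-cluster point $\phi$ of $T\Ball_X$ and the element $x\in X$ your sequential argument produces will agree on $\operatorname{span}(y_n)$ but a priori not on all of $Y$. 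Closing this gap requires the full \v{S}mulian-type angelicity result that the paper cites from Floret; your sketch reproduces only the easier half of it.
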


\begin{proof}
\ref{i.l.eberlein_smulian.smulian} follows from a version of \v{S}mulian's theorem (see \cite[Theorem~in~\S~3.2]{Floret1980}) and~\ref{i.l.eberlein_smulian.eberlein} from a version of Eberlein's theorem (see \cite[Theorem~2~in~\S~24]{Koethe1969}).
\end{proof}

Consider a Banach space $X$ and let $Y$ be a closed, $\sigma(X^*,X)$-dense subspace of $X^*$. Then, by \cite[Proposition 2.6]{Guirao2019}, $(X,\mu(X,Y))$ is quasi-complete if and only if it is complete. Note, that if $(X,Y)$ is a norming dual pair, then $Y$ is $\sigma(X^*,X)$-dense in $X$,
so this result is applicable in our setting. We refer to \cite{Guirao2017} for further conditions ensuring completeness.
On the other hand, \cite[Example 3]{Bonet2010} provides an example, where $(X, \mu(X,Y))$ is not quasi-complete. 
Finally, we remark that by the Bourbaki--Robertson Lemma~\cite[\S18.4.4]{Koethe1969}, $(X, \mu(X,Y))$ is complete if and only if there exists a consistent topology $\tau$ on $X$ such that $(X, \tau)$ is complete.

\begin{prop} \label{p.choquet}
    Let $(X,Y)$ be a norming dual pair and let $K \subset Y$ be non-empty, convex and $\sigma(Y,X)$-compact. Then the operator
    \begin{align*}
        \Phi \colon X \to C(K), \quad (\Phi x)(y) \coloneqq \langle x, y \rangle_{X,Y}
    \end{align*}
    has the following properties:
    \begin{enumerate}[\upshape (a)]
        \item \label{i.p.choquet.imageNormAdjoint}
            The range of the norm adjoint $\Phi^*\colon C(K)^* \to X^*$ is contained in $Y$.

        \item \label{i.p.choquet.PhiContinuousWeak}
            $\Phi$ is continuous with respect to the topologies $\sigma(X,Y)$ and $\sigma(C(K), C(K)^*)$.

        \item \label{i.p.choquet.PhiContinuousMackey}
              $\Phi$ is continuous with respect to the Mackey topology $\mu(X,Y)$ and the supremum norm on $C(K)$.
    \end{enumerate}
\end{prop}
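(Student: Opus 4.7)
The plan is to establish~\ref{i.p.choquet.imageNormAdjoint} first, since \ref{i.p.choquet.PhiContinuousWeak} and \ref{i.p.choquet.PhiContinuousMackey} will then follow by essentially formal polar-calculus arguments of the type already used in the proofs of Proposition~\ref{p.dense} and Proposition~\ref{p.weakcomp}.

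For~\ref{i.p.choquet.imageNormAdjoint}, the key ingredient is the existence of barycenters for probability Radon measures on compact convex subsets of Hausdorff locally convex spaces. Since the pairing is norming, $X$ separates the points of $Y$, so $(Y, \sigma(Y,X))$ is Hausdorff locally convex and $K$ is a compact convex subset of it. A classical result of Choquet theory then produces, for every probability Radon measure $\mu \in C(K)^\ast$, a (unique) barycenter $b(\mu) \in K$ satisfying
\[
    \langle x, b(\mu)\rangle_{X,Y} = \int_K \langle x, y\rangle_{X,Y} \dx\mu(y) = \langle \Phi x, \mu\rangle
\]
for every $x \in X$. This identifies $\Phi^\ast \mu$ with $b(\mu) \in K \subseteq Y$. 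A general $\mu \in C(K)^\ast$ can be decomposed into a linear combination of probability measures (via a Jordan-type decomposition of the real and imaginary parts), and linearity of $\Phi^\ast$ together with the fact that $Y$ is a vector space finishes the argument.

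Part~\ref{i.p.choquet.PhiContinuousWeak} is then immediate: for each $\mu \in C(K)^\ast$, the functional $x \mapsto \langle \Phi x, \mu\rangle = \langle x, \Phi^\ast\mu\rangle$ is $\sigma(X,Y)$-continuous because $\Phi^\ast\mu \in Y$. For~\ref{i.p.choquet.PhiContinuousMackey}, I would set $A \coloneqq \Phi^\ast(\Ball_{C(K)^\ast})$. By the Banach--Alaoglu theorem, $\Ball_{C(K)^\ast}$ is absolutely convex and $\sigma(C(K)^\ast, C(K))$-compact, and the norm adjoint $\Phi^\ast$ is linear and weak-$\ast$-to-weak-$\ast$ continuous, so $A$ is absolutely convex and $\sigma(X^\ast, X)$-compact. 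By~\ref{i.p.choquet.imageNormAdjoint}, $A$ actually lies in $Y$, on which $\sigma(X^\ast, X)$ restricts to $\sigma(Y,X)$; thus $A$ is absolutely convex and $\sigma(Y,X)$-compact. A direct computation yields
\[
    A^\circ = \set[\big]{x \in X \suchthat \norm{\Phi x}_\infty \le 1} = \Phi^{-1}(\Ball_{C(K)}),
\]
and since $A$ is absolutely convex and $\sigma(Y,X)$-compact, its polar $A^\circ$ is a $\mu(X,Y)$-neighbourhood of $0$. Hence $\Phi$ is continuous at $0$, and by linearity everywhere, as a map from $(X, \mu(X,Y))$ to $(C(K), \norm{\argument}_\infty)$.

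The main obstacle is~\ref{i.p.choquet.imageNormAdjoint}: the barycenter statement, though classical, is the only place where nontrivial content enters the proof. Once the barycenter is in hand and is shown to realize $\Phi^\ast\mu$, both continuity assertions reduce to bookkeeping about polars and adjoints.
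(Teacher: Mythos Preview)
Your proof is correct. For part~\ref{i.p.choquet.imageNormAdjoint} you invoke the barycenter theorem as a black box, whereas the paper essentially proves the needed instance inline: it observes that $\Phi^*\delta_y = y$, that convex combinations of Diracs are $\sigma(C(K)^*,C(K))$-dense in the probability measures (Hahn--Banach separation), and that $\Phi^*$ is weak-$*$-to-weak-$*$ continuous into the $\sigma(X^*,X)$-closed convex set $K$; this yields $\Phi^*(\calP(K)) = K$ directly. The two arguments are the same idea, yours packaged and theirs unpacked. For part~\ref{i.p.choquet.PhiContinuousMackey} the routes genuinely diverge: the paper does not pass through $\Phi^*$ or polars at all, but simply notes that the absolutely convex hull of $K$ is still $\sigma(Y,X)$-compact (joint continuity of scalar multiplication on $\overline{\Ball}_{\bbK}\times K$), and then $\norm{\Phi x}_\infty = \sup_{y\in K}\modulus{\langle x,y\rangle}$ is bounded by the Mackey seminorm associated to that hull. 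Your polar argument with $A=\Phi^*(\Ball_{C(K)^*})$ is slightly more machinery but has the advantage of reusing~\ref{i.p.choquet.imageNormAdjoint}; the paper's argument is shorter and independent of~\ref{i.p.choquet.imageNormAdjoint}.
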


\begin{proof}
    \ref{i.p.choquet.imageNormAdjoint}: Clearly, $\Phi \colon X \to C(K)$ is a bounded operator. For each $y \in K$, let $\delta_y \in C(K)^*$ be the Dirac measure at $y$. Moreover, let $C \subseteq C(K)^*$ be the convex hull of the set $\set{\delta_y : y \in K}$. Then $C$ is $\sigma(C(K)^*, C(K))$-dense in the set of Radon probability measures $\calP(K)$ on $K$ by the Hahn--Banach separation theorem. Furthermore, one has $\Phi^* \delta_y = y$ for all $y \in K$ and thus
    \begin{align}
        \Phi^*(\calP(K)) = K \subseteq Y, \label{eq.range-phi-star}
    \end{align}
    since $K$ is convex and $\Phi^*$ is continuous with respect to $\sigma(C(K)^*, C(K))$ and $\sigma(X^*, X)$. 
    From this, it follows that $\Phi^*(C(K)^\ast)\subseteq Y$ by decomposing
    a general measure $\mu \in C(K)^*$ first into real and imaginary
    parts and then those into positive and negative parts. \smallskip
    
    \ref{i.p.choquet.PhiContinuousWeak}:
    This is a straightforward consequence of~\ref{i.p.choquet.imageNormAdjoint}. \smallskip

    \ref{i.p.choquet.PhiContinuousMackey}:
    Suppose that $(x_\lambda)_\lambda$ is a net in $X$ converging to $x \in X$ with respect to the Mackey topology $\mu(X, Y)$. From the joint continuity with respect to $\sigma(Y, X)$ of the scalar multiplication, it follows that the absolute convex hull
    \begin{align*}
        C \coloneqq \{\alpha z : \alpha \in \bbK, \, \modulus{\alpha} \leq 1, \, z \in K\}
    \end{align*}
    of $K$ is also $\sigma(Y, X)$-compact. Hence, it follows from the definition of the Mackey topology that
    \begin{align*}
        \norm{\Phi(x_\lambda) - \Phi(x)}_\infty = \sup_{y \in K} \modulus{\langle x_\lambda - x, y \rangle} \leq \sup_{y \in C} \modulus{\langle x_\lambda - x, y \rangle} \to 0.
    \end{align*}
    This yields the $\mu(X, Y)$-$\norm{\argument}_\infty$-continuity of $\Phi$.
\end{proof}

\begin{lem}
    \label{l.compact-improving}
    Let $(X,Y)$ be a norming dual pair and let $T\in \calL(X,\sigma(X,Y))$. Set $S \coloneqq T' \in \calL(Y,\sigma(Y,X))$. If $T$ is countably $\sigma(X,Y)$-compact, then for every convex and $\sigma(Y,X)$-compact subset $K \subset Y$ the image $S(K)$ is $\sigma(Y,Y^*)$-compact.
\end{lem}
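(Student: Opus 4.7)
The approach is to reduce the problem to a classical weak compactness question in the Banach space $C(K)$ via the operator $\Phi \colon X \to C(K)$ from Proposition~\ref{p.choquet}, so that Gantmacher's classical theorem on Banach-space adjoints can be applied to $\Phi \circ T$.

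The first step is to show that $\Phi \circ T \colon X \to C(K)$ is a weakly compact operator. By the hypothesis, $T(\Ball_X)$ is relatively countably $\sigma(X,Y)$-compact. Since $\Phi$ is $\sigma(X,Y)$-to-$\sigma(C(K), C(K)^*)$-continuous by Proposition~\ref{p.choquet}\ref{i.p.choquet.PhiContinuousWeak}, the set $\Phi(T(\Ball_X))$ is relatively countably weakly compact in $C(K)$. The classical Eberlein--\v{S}mulian theorem (Proposition~\ref{p.es}) then upgrades this to relative weak compactness. It is here that the jump from ``countably compact'' to ``compact'' is absorbed by entering a genuine Banach space.

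Next, I would apply the classical Gantmacher theorem to conclude that the norm-adjoint $(\Phi \circ T)^* = T^* \circ \Phi^* \colon C(K)^* \to X^*$ is also weakly compact. For each $y \in K$, the proof of Proposition~\ref{p.choquet}\ref{i.p.choquet.imageNormAdjoint} yields $\Phi^* \delta_y = y$, while Proposition~\ref{p.weaklycontinuous}(c) gives $T^* y = T' y = S y$. Therefore $S(K) = (\Phi \circ T)^*(\{\delta_y : y \in K\})$ is the image of a norm-bounded subset of $C(K)^*$ under a weakly compact operator, and so $S(K)$ is relatively $\sigma(X^*, X^{**})$-compact in $X^*$.

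The final step—and the expected main obstacle—is to upgrade this relative compactness in $X^*$ to genuine $\sigma(Y, Y^*)$-compactness of $S(K) \subset Y$. The crucial observation is that, since $S \in \calL(Y, \sigma(Y,X))$ and $K$ is $\sigma(Y,X)$-compact, $S(K)$ is $\sigma(Y,X)$-compact and in particular $\sigma(Y,X)$-closed. Viewing $Y$ as a closed subspace of $X^*$, the topology $\sigma(X^*, X)$ restricted to $Y$ coincides with $\sigma(Y, X)$, so $S(K)$ is $\sigma(X^*, X)$-closed in $X^*$; as $\sigma(X^*, X) \subset \sigma(X^*, X^{**})$, it is also $\sigma(X^*, X^{**})$-closed. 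Combined with Gantmacher's relative compactness, this makes $S(K)$ actually $\sigma(X^*, X^{**})$-compact in $X^*$. By the Hahn--Banach theorem every continuous functional on $Y$ extends to $X^*$, so $\sigma(X^*, X^{**})$ restricted to $Y$ coincides with $\sigma(Y, Y^*)$, yielding the desired conclusion. The subtle point is that Gantmacher alone provides only relative compactness in the bidual-weak topology on $X^*$; the $\sigma(Y,X)$-continuity of $S$ is what confines the accumulation points to $Y$ and transforms relative compactness into genuine compactness.
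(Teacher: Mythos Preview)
Your proof is correct and follows essentially the same route as the paper's: both pass through $\Phi\colon X\to C(K)$, use Eberlein--\v{S}mulian to upgrade countable compactness inside $C(K)$, apply classical Gantmacher to $(\Phi T)^*$, and then combine the resulting relative $\sigma(X^*,X^{**})$-compactness with the $\sigma(Y,X)$-closedness of $S(K)$. The only cosmetic differences are that the paper writes $S(K)=(\Phi T)^*(\calP(K))$ using all probability measures rather than just the Diracs, and compresses your final topology-restriction argument into a single line.
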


\begin{proof}
    Consider the map $\Phi$ from Proposition~\ref{p.choquet}. The continuity of $\Phi$ proved in the proposition and the compactness assumption on $T$ imply that the composition
    \begin{align*}
        X \overset{T}{\longrightarrow} X \overset{\Phi}{\longrightarrow} C(K)
    \end{align*}
    is countably $\sigma(C(K), C(K)^*)$-compact. It follows from Proposition \ref{p.es}, that 
    $\Phi T$ is even $\sigma (C(K),C(K)^*)$-compact. Thus, the classical theorem of Gantmacher \cite[Theorem~5.23]{Aliprantis2006a} implies that $(\Phi T)^*\colon C(K)^* \to X^*$ is $\sigma(X^*,X^{**})$-compact. 

    Moreover, \eqref{eq.range-phi-star} implies that $S(K) = T^*(K) = (\Phi T)^*(\calP(K))$, where $\calP(K)$ again denotes the set of Radon probability measures on $K$. In particular, $S(K)$ is relatively weakly compact in $X^*$. As $S \in \calL(Y, \sigma(Y, X))$, the set $S(K)$ is $\sigma(Y, X)$-compact and thus $\sigma(Y, X)$-closed. In particular, $S(K)$ is $\sigma(Y, Y^*)$-closed.
\end{proof}

\subsection{The Dunford--Pettis property on norming dual pairs}

Next we introduce an important notion for this paper, the Dunford--Pettis property in the framework of norming dual pairs.

\begin{defn}\label{def.dp}
    Let $(X,Y)$ be a norming dual pair. We say that $X$ has the \emph{Dunford--Pettis property with respect to $Y$}, if for every sequence $(x_n)_n$ in $X$ converging to $0$ with respect to $\sigma(X, Y)$ and every sequence $(y_n)_n$ in $Y$ converging to $0$ with respect to $\sigma(Y, Y^*)$, one has $\langle x_n, y_n\rangle \to 0$ as $n \to \infty$.
\end{defn}

\begin{rem}\label{rem.classical}
    In the classical situation (see \cite[Section~3.7]{MeyerNieberg1991}), a Banach space $X$ has the Dunford--Pettis property if and only if whenever $(x_n)_n$ is a sequence in $X$ converging to $0$ with respect to $\sigma (X,X^*)$ and $(x_n^*)_n$ is a sequence in $X^*$ which converges to $0$ with respect to $\sigma (X^*, X^{**})$, then $\langle x_n, x_n^*\rangle \to 0$ as $n \to \infty$ (see, e.g., \cite[Theorem~3.7.7]{MeyerNieberg1991}). Thus, $X$ has the classical Dunford--Pettis property if and only if $X$ has the Dunford--Pettis property with respect to $X^*$ in the sense of Definition~\ref{def.dp}. Examples of Banach spaces having the classical Dunford--Pettis property are given by $L^1(\Omega, \Sigma, \mu)$ or $C(K)$, where $(\Omega, \Sigma, \mu)$ is a measure space and $K$ a compact Hausdorff topological space. 
\end{rem}

In our next result, we use two notions from the theory of Banach lattices. Recall that the lattice operations on a Banach lattice $X$ are said to be 
\emph{sequentially $\sigma(X,Y)$-continuous} if for every sequence $(x_n)_n$ in $X$ converging to $0$ with respect to $\sigma(X, Y)$,
one has $\modulus{x_n} \to 0$ with respect to $\sigma(X, Y)$ as $n \to \infty$. Moreover, a Banach lattice $Y$ is called  \emph{AL-space} 
if its norm is additive on the positive cone, i.e., $\norm{y_1 + y_2}= \norm{y_1} + \norm{y_2}$ for all $y_1,y_2 \in Y_+$. 

\begin{prop}\label{p.ALDP}
    Let $(X,Y)$ be a norming dual pair, where $X$ is a Banach lattice with sequentially $\sigma(X,Y)$-continuous lattice operations and
    $Y$ is an AL-space. Then $X$ has the Dunford--Pettis property
    with respect to $Y$.
\end{prop}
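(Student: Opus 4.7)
The plan is to reduce the question of whether $\langle x_n, y_n\rangle \to 0$ to that of whether $\langle |x_n|, u\rangle \to 0$ for a single, $n$-independent element $u \in Y^+$. The AL-hypothesis on $Y$ is what provides such a $u$, and the sequential lattice continuity on $X$ is what makes the resulting pairing vanish.

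Concretely, let $(x_n)_n \subset X$ and $(y_n)_n \subset Y$ satisfy $x_n \to 0$ in $\sigma(X,Y)$ and $y_n \to 0$ in $\sigma(Y, Y^*)$; both sequences are norm bounded (by Proposition~\ref{p.weaklycontinuous}\,(a) and the uniform boundedness principle, respectively). Set $M \coloneqq \sup_n \norm{x_n}$. The first step is to invoke the sequential $\sigma(X, Y)$-continuity of the lattice operations on $X$ to obtain $|x_n| \to 0$ in $\sigma(X,Y)$. The second step exploits that an AL-space has order continuous norm (in fact is a KB-space), so the solid hull of the relatively $\sigma(Y, Y^*)$-compact set $\set{y_n : n \in \N}$ is again relatively $\sigma(Y, Y^*)$-compact. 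Consequently $\set{y_n : n \in \N}$ is \emph{L-weakly compact}, and the standard characterization of L-weakly compact subsets of a Banach lattice yields, for each $\eps > 0$, an element $u = u_\eps \in Y^+$ with
\[
    \sup_{n \in \N} \norm{(|y_n| - u)^+} \le \eps.
\]

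With $u$ in hand, the proof reduces to a short lattice estimate. Using the inequality $|\langle x, y\rangle| \le \langle |x|, |y|\rangle$ (which holds in the present setting provided the AL-order on $Y$ is compatible with the order inherited from $X^*$, which is the natural convention in all intended applications) together with the decomposition $|y_n| = |y_n| \wedge u + (|y_n| - u)^+$, one obtains
\[
    |\langle x_n, y_n \rangle| \le \langle |x_n|, u \rangle + \norm{x_n} \cdot \eps \le \langle |x_n|, u\rangle + M\eps.
\]
Since $u \in Y$ and $|x_n| \to 0$ in $\sigma(X, Y)$, the first summand tends to $0$, so that $\limsup_n |\langle x_n, y_n\rangle| \le M\eps$; letting $\eps \to 0$ finishes the argument.

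The main obstacle I would expect is purely lattice-theoretic: one has to locate (or rederive) the ``equi-integrability'' characterization of L-weakly compact subsets of an AL-space, that is, the statement that any relatively weakly compact set can be trapped, up to norm $\eps$, inside a single principal ideal $[-u, u]$. This is the ingredient that converts the two independent weak null hypotheses into a single cooperative estimate; once it is in hand, the remaining manipulations are routine. The compatibility between the AL-structure of $Y$ and its embedding into $X^*$ is a minor technical point that one should record explicitly before running the estimate.
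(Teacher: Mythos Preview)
Your argument is correct and follows essentially the same route as the paper's proof: both reduce to the estimate $|\langle x_n,y_n\rangle|\le\langle|x_n|,u\rangle+M\eps$ via the ``almost order boundedness'' of relatively weakly compact subsets of an AL-space. The paper invokes this directly as \cite[Theorem~2.5.4(iv)]{MeyerNieberg1991}, obtaining $\{y_n\}\subset[-y,y]+\eps\Ball_Y$, whereas you pass through L-weak compactness and the equivalent formulation $\sup_n\norm{(|y_n|-u)^+}\le\eps$; the resulting inequalities are interchangeable. Your explicit remark that the estimate $|\langle x,y\rangle|\le\langle|x|,|y|\rangle$ presupposes compatibility of the lattice order on $Y$ with the one inherited from $X^*$ is well taken---the paper uses this inequality tacitly as well, so you are not introducing an extra hypothesis but rather surfacing one that is implicit in both arguments.
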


\begin{proof}
    Let further $(x_n)_n$ be a sequence in $X$ converging to $0$ with respect to $\sigma(X, Y)$. By assumption, we have $\modulus{x_n} \to 0$ with respect to $\sigma(X, Y)$ as $n \to \infty$.

    Now let $(y_n)_n$ be a sequence in $Y$ that converges to $0$ with respect to $\sigma(Y, Y^*)$ as $n \to \infty$. 
    Then the set $\{y_n : n\in \N\}$ is a relatively weakly compact subset of the AL-space $Y$. 
    By the characterization of relatively weakly compact subsets of AL-spaces \cite[Theorem~2.5.4(iv)]{MeyerNieberg1991}, 
    for each $\eps > 0$, there exists $y \in Y_+$ such that
    \[
        \{y_n : n\in \N\} \subset [-y, y] + \eps \Ball_Y.
    \]
    It follows that
    \begin{align*}
        \limsup_{n \to \infty} |\langle x_n, y_n\rangle | & \leq \limsup_{n \to \infty} \langle |x_n|, |y_n|\rangle \\
        & \leq \limsup_{n \to \infty} \langle |x_n|, y\rangle + \eps \limsup_{n \to \infty}\|x_n\| \\
        & \leq \eps \sup \{ \|x_n\| : n \in \N\}.
    \end{align*}
    As $\eps>0$ was chosen arbitrarily, this shows that $\langle x_n, y_n\rangle \to 0$ as $n \to \infty$.
\end{proof}

\begin{rem}
    It is worth mentioning that in Proposition \ref{p.ALDP} 
    the assumption that $Y$ is an AL-space can be weakened
    to $Y$ being a Banach lattice with
    the \emph{positive Schur property}. The latter means
    that $\norm{y_n} \to 0$ as $n \to \infty$ for each weakly null sequence $(y_n)_n$ in $Y_+$. By \cite[Section 3]{Wnuk1989a},
    this notion is strictly weaker than being an AL-space. 
    The positive Schur property is closely related to the Dunford--Pettis theory. In particular, by \cite[Theorem]{Wnuk1989}, a Banach lattice $Y$ has the positive Schur property if and only if $Y$ is $\sigma$-Dedekind complete and if each operator $T \colon E \to c_0$ is a Dunford--Pettis operator if and only if it is regular. For more information on the positive Schur property, we refer to the references \cite{SanchezHenriquez1992, Wnuk1989, Wnuk1989a, Wnuk1992}. 
\end{rem}

We point out that the Dunford--Pettis property is not symmetric, i.e., in a norming dual pair $(X,Y)$ it may happen that $X$ has the Dunford--Pettis property with respect to $Y$ while $Y$ does not have the Dunford--Pettis property with respect to $X$. As a matter of fact, this might already happen in the case where $Y=X^*$ and under the additional assumptions of Proposition~\ref{p.ALDP}, as the following example illustrates.

\begin{example}
    Let $K \coloneqq [0, 1]$ and consider the norming dual pair $(C(K), \calM(K))$. As $C(K)$ is an $AM$-space and ${C(K)}^* = \calM(K)$, it follows from~\cite[Proposition~2.1.11]{MeyerNieberg1991} that the lattice operations in $C(K)$ are sequentially $\sigma (C(K), \calM(K))$-continuous. By Proposition~\ref{p.ALDP} $C(K)$ has the Dunford--Pettis property with respect to $\calM(K)$ (cf.~also Remark~\ref{rem.classical}). On the other hand, $\calM(K)$  does not have the Dunford--Pettis property with respect to $C(K)$. To see this, consider $\mu_n=\delta_{1/n}-\delta_0$. Then $\mu_n\to 0$ with respect to $\sigma(\calM(K), C(K))$. Moreover, let $(f_n)_n$ be a bounded sequence of functions in $C(K)$ that converges pointwise to $0$ and that satisfies $f_n(0) = 0$ and $f_n(1/n) = 1$ for all $n$. Then $f_n\to 0$ with respect to $\sigma(C(K), \calM(K))$, but $\langle \mu_n, f_n\rangle = 1 \not\to 0$ as $n \to \infty$.
\end{example}

The next result is the main result of this section. As mentioned in the introduction, it is a generalization of Grothendieck's result to norming dual pairs.

\begin{thm}\label{t.dpimproving}
    Let $(X,Y)$ be a norming dual pair such that $X$ has the Dunford--Pettis property with respect to $Y$. If $T\in \calL(X,\sigma(X,Y))$ is countably $\sigma(X,Y)$-compact, then $T$ is sequentially continuous from $\sigma(X,Y)$ to $\mu(X,Y)$.
\end{thm}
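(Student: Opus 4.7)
The strategy is a standard ``extract a subsequence and get a contradiction'' argument that combines Lemma~\ref{l.compact-improving} (to produce weak compactness of the adjoint image) with the Dunford--Pettis hypothesis. By linearity, it suffices to take a sequence $(x_n)_n$ in $X$ converging to $0$ with respect to $\sigma(X,Y)$ and show that $Tx_n \to 0$ with respect to $\mu(X,Y)$. By the definition of the Mackey topology, this amounts to proving, for every absolutely convex $\sigma(Y,X)$-compact subset $K \subset Y$, that
\[
\sup_{y \in K} |\langle Tx_n, y\rangle| = \sup_{y \in K} |\langle x_n, Sy\rangle| \longrightarrow 0,
\]
where $S \coloneqq T'$ is the $\sigma(X,Y)$-adjoint.

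I would argue by contradiction: assuming failure, there exist $\eps > 0$, an absolutely convex $\sigma(Y,X)$-compact set $K\subseteq Y$, and, after passing to a subsequence, elements $y_n \in K$ with $|\langle x_n, Sy_n\rangle| \geq \eps$ for every $n$. Since $K$ is convex and $\sigma(Y,X)$-compact, Lemma~\ref{l.compact-improving} gives that $S(K)$ is $\sigma(Y,Y^*)$-compact. The Eberlein--\v{S}mulian theorem (Proposition~\ref{p.es}) applied in the Banach space $Y$ then yields a further subsequence, still labelled $(y_n)_n$, such that $Sy_n \to z$ with respect to $\sigma(Y,Y^*)$ for some $z \in S(K) \subseteq Y$.

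Now split
\[
\langle x_n, Sy_n\rangle = \langle x_n, Sy_n - z\rangle + \langle x_n, z\rangle.
\]
The second summand tends to $0$ because $z \in Y$ and $x_n \to 0$ with respect to $\sigma(X,Y)$. For the first summand, observe that $Sy_n - z \to 0$ with respect to $\sigma(Y,Y^*)$, while $x_n \to 0$ with respect to $\sigma(X,Y)$; the Dunford--Pettis property of $X$ with respect to $Y$ therefore forces $\langle x_n, Sy_n - z\rangle \to 0$. Combining both estimates contradicts $|\langle x_n, Sy_n\rangle| \geq \eps$ and concludes the proof.

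The only non-routine ingredient is the identification of the correct ``weakly compact'' substitute for the adjoint image on the $Y$-side, which is precisely what Lemma~\ref{l.compact-improving} provides; once $S(K)$ is known to be $\sigma(Y,Y^*)$-compact, Eberlein--\v{S}mulian lets one extract a genuine sequential limit $z$ inside $Y$, and the Dunford--Pettis hypothesis finishes the argument. I do not anticipate any hidden obstacle beyond bookkeeping with the iterated subsequences.
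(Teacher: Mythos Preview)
Your proof is correct and follows essentially the same approach as the paper: contradiction, choice of $y_n\in K$ attaining (near) the supremum, application of Lemma~\ref{l.compact-improving} plus Eberlein--\v{S}mulian to extract a $\sigma(Y,Y^*)$-convergent subsequence of $(Sy_n)_n$, and then the splitting into two terms handled by $\sigma(X,Y)$-convergence and the Dunford--Pettis hypothesis, respectively. The only cosmetic difference is that the paper writes the limit as $T'y$ for some $y\in K$ rather than as an abstract $z\in S(K)$.
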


\begin{proof}
    Aiming for a contradiction, let $(x_n)_n$ be a sequence in $X$ such that $x_n \to 0$ with respect to $\sigma(X,Y)$ but $Tx_n \not\to 0$ with respect to the Mackey topology $\mu (X,Y)$ as $n \to \infty$.

    Recall that the Mackey topology $\mu(X,Y)$ is generated by the seminorms
    \begin{align*}
        p_K \colon X \to \R, \quad p_K(x) = \max_{y \in K} \lvert \langle x, y \rangle \rvert
        \qquad \text{for all } x \in X,
    \end{align*}
    where $K$ runs through all absolutely convex $\sigma(Y,X)$-compact subsets of $Y$. Hence, there exists such a set $K$ and some $\eps > 0$ such that, after passing to a subsequence, $p_K(Tx_n)\geq \eps$ for all $n \in \N$. By the definition of $p_K$, for each index $n \in \N$, there exists an element $y_n \in K$ such that $p(T x_n) = \lvert \langle x_n, y_n \rangle \rvert$.
    According to Lemma~\ref{l.compact-improving}, the set $T'(K)$ is $\sigma(Y,Y^*)$-compact and, hence, sequentially $\sigma(Y,Y^*)$-compact 
    by Proposition \ref{p.es}.
    So after passing to a subsequence, there exists some $y \in K$ such that $T'y_n \to T'y$ with respect to $\sigma(Y, Y^*)$ as $n \to \infty$. Thus,
    \[
        \eps \leq p_K(Tx_n)
        =
        \langle Tx_n, y_n \rangle_{X,Y}
        =
        \langle x_n, T'y\rangle _{X,Y} + \langle x_n, T'y_n - T'y \rangle_{X, Y} \to 0,
    \]
    as $X$ has the  Dunford--Pettis property with respect to $Y$;
    this is a contradiction.
\end{proof}

We are now in position to establish the following corollary that will serve as the abstract cornerstone in the concrete applications in the subsequent sections.

\begin{cor} \label{c.ultrafeller}
    Let $(X,Y)$ be a norming dual pair such that $X$ has the Dunford--Pettis
    property with respect to $Y$,
    let $S \in \calL(X, \sigma(X,Y))$ be sequentially $\sigma(X,Y)$-compact, and let $T \in \calL(X, \sigma(X,Y))$ be countably $\sigma(X,Y)$-compact. Then the following assertions hold: 
    \begin{enumerate}[\upshape (a)]
        \item \label{i.c.ultrafeller.TSSequentiallyMackeyCompact}
              The product $TS$ is sequentially $\mu(X,Y)$-compact.

        \item \label{i.c.ultrafeller.TSMackeyCompact}
              If $X$ is quasi-complete with respect to the Mackey topology $\mu(X,Y)$, then $TS$ is $\mu(X,Y)$-compact.
    \end{enumerate}
\end{cor}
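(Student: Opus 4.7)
The plan is to combine the sequential $\sigma(X,Y)$-compactness of $S$ with the mode-of-convergence improvement supplied by Theorem~\ref{t.dpimproving} applied to $T$. That theorem is available since $T$ is countably $\sigma(X,Y)$-compact and $X$ has the Dunford--Pettis property with respect to $Y$, and it yields that $T$ is sequentially continuous from $\sigma(X,Y)$ to $\mu(X,Y)$.

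To prove~\ref{i.c.ultrafeller.TSSequentiallyMackeyCompact}, I would start with a bounded sequence $(x_n)_n$ in $X$. Sequential $\sigma(X,Y)$-compactness of $S$ furnishes a subsequence along which $Sx_{n_k} \to z$ in $\sigma(X,Y)$ for some $z \in X$. Setting $w_k := Sx_{n_k} - z$, the sequence $(w_k)_k$ converges to $0$ with respect to $\sigma(X,Y)$, so Theorem~\ref{t.dpimproving} gives $Tw_k \to 0$ with respect to $\mu(X,Y)$. Hence $TSx_{n_k} \to Tz$ in $\mu(X,Y)$, establishing sequential $\mu(X,Y)$-compactness of $TS$.

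For~\ref{i.c.ultrafeller.TSMackeyCompact}, I would first observe that a relatively sequentially $\mu(X,Y)$-compact set is in particular relatively countably $\mu(X,Y)$-compact: any infinite subset of the image contains a sequence of pairwise distinct elements, and the $\mu(X,Y)$-limit of the convergent subsequence provided by~\ref{i.c.ultrafeller.TSSequentiallyMackeyCompact} furnishes an accumulation point in $X$. Thus $TS$ is countably $\mu(X,Y)$-compact, and Lemma~\ref{l.eberlein_smulian}\ref{i.l.eberlein_smulian.eberlein} applied with $\tau = \mu(X,Y)$, using the assumed quasi-completeness of $(X,\mu(X,Y))$, upgrades this to $\mu(X,Y)$-compactness.

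The argument is essentially a repackaging of Theorem~\ref{t.dpimproving} and Lemma~\ref{l.eberlein_smulian}, so I do not anticipate a serious obstacle. The one conceptual subtlety is that Theorem~\ref{t.dpimproving} delivers only sequential, not topological, continuity from $\sigma(X,Y)$ to $\mu(X,Y)$, which is precisely the reason the quasi-completeness hypothesis is indispensable in~\ref{i.c.ultrafeller.TSMackeyCompact} to invoke the Eberlein-type result.
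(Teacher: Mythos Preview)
Your proposal is correct and follows essentially the same route as the paper's proof: for \ref{i.c.ultrafeller.TSSequentiallyMackeyCompact} you extract a $\sigma(X,Y)$-convergent subsequence via $S$ and then apply Theorem~\ref{t.dpimproving} to $T$, and for \ref{i.c.ultrafeller.TSMackeyCompact} you pass from sequential to countable $\mu(X,Y)$-compactness and invoke Lemma~\ref{l.eberlein_smulian}\ref{i.l.eberlein_smulian.eberlein}. The only cosmetic difference is that you introduce the auxiliary sequence $w_k = Sx_{n_k} - z$, whereas the paper applies the sequential continuity of $T$ directly to the convergent sequence $(Sx_{n_k})_k$.
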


\begin{proof}
    \ref{i.c.ultrafeller.TSSequentiallyMackeyCompact}:
    Let $(x_n)_n$ be a bounded sequence in $X$. Then there exists a subsequence $(x_{n_k})_k$ such that $Sx_{n_k}$ converges with respect to $\sigma(X,Y)$ to some $x\in X$ as $k \to \infty$. Thus, by Theorem~\ref{t.dpimproving}, $TSx_{n_k} \to z\coloneqq Tx$ with respect to $\mu(X,Y)$ as $k \to \infty$. \smallskip

    \ref{i.c.ultrafeller.TSMackeyCompact}:
    As sequential compactness implies countable compactness, this follows from Lemma~\ref{l.eberlein_smulian}\ref{i.l.eberlein_smulian.eberlein} applied to $\tau = \mu(X,Y)$.
\end{proof}

\begin{rem}
    A classical result of Dunford and Pettis states that if $X$ is a Banach space with the (classical) Dunford--Pettis property (cf.\ Remark~\ref{rem.classical}), then the square of a weakly compact operator is compact (see for example~\cite[Corollary~5.88]{Aliprantis2006a}). This result can be obtained from Corollary~\ref{c.ultrafeller} by applying it to the pair $(X, X^*)$ and observing that $\mu(X, X^*)$ coincides with the (complete) norm topology.
\end{rem}

\section{Strong and ultra Feller operators on the pair \texorpdfstring{$(C_b(E), \calM(E))$}{(Cb(E), M(E))}}\label{sect.classic}

Throughout this section, let $E$ be a completely regular Hausdorff topological space and denote its Borel $\sigma$-algebra by $\calB(E)$. The space of bounded and continuous scalar-valued functions on $E$ and the space of bounded and Borel measurable scalar-valued functions on $E$ are denoted by $C_b(E)$ and $B_b(E)$, respectively. Equipping these spaces with the supremum norm $\Vert \argument \Vert_\infty$ renders them both Banach spaces.

Recall that a Borel measure $\mu$ is called a \emph{Radon measure} if for every $A\in \calB(E)$ and $\eps>0$ there exists a compact set $K\subset A$ such that $|\mu|(A\setminus K) \leq \eps$. Here, $|\mu|$ denotes the total variation of the measure $\mu$. Further, we denote the space of bounded (signed or complex) Radon measures on the measurable space $(E, \calB(E))$ by $\calM(E)$. Note that $\calM(E)$ is a Banach space when equipped with the total variation norm $\lVert \argument \rVert_{\mathrm{TV}}$. For $f\in C_b(E)$ and $\mu \in \calM(E)$, we write
\begin{equation}\label{eq.duality}
    \langle f, \mu \rangle \coloneqq \int_E f\, \ud\mu.
\end{equation}

Besides the norm topology, we consider another locally convex topology on $C_b(E)$, namely the \emph{strict topology} $\beta_0(E)$, that is defined as follows: Let $\calF_0(E)$ be the space of bounded functions $\varphi \colon E \to \R$ that \emph{vanish at infinity}, i.e., for each $\eps > 0$ there exists a  compact set $K \subset E$ such that $|\varphi (x)|\leq \eps$ for all $x\in E \setminus K$. The topology $\beta_0(E)$ is generated by the seminorms ${(p_\varphi)}_{\varphi \in \calF_0(E)}$, where $p_\varphi(f) \coloneqq \|\varphi f\|_\infty$.

We emphasize that the strict topology $\beta_0(E)$, in general, does not coincide with the Mackey topology of the norming dual pair $(C_b(E), \calM(E))$. However, by \cite[Theorems~4.5~\&~5.8]{Sentilles1972}, this is the case if $E$ is a \emph{Polish space}, i.e., its topology is metrizable through a complete, separable metric. If $E$ is a compact Hausdorff topological space, both the Mackey topology $\mu(C_b(E), \calM(E))$ and the strict topology $\beta_0(E)$ coincide with the norm topology.

Our first lemma collects some information about the pair $(C_b(E), \calM(E))$ and demonstrates that our main results from Section~\ref{sect.weakcompactness} are applicable to the norming dual pair $(C_b(E), \calM(E))$. We recall that $E$ is \emph{compactly generated} (or a \emph{k-space}) if a set $U \subset E$ is open in $E$ if and only if $U \cap K$ is open in $K$ for every compact subset $K$ of $E$. Equivalently, $E$ is compactly generated if and only if each function $f\colon E \to \Omega$ to some topological space $\Omega$ is continuous if and only if it is continuous on every compact subset of $E$. Note that every locally compact space and every metric space (or, more generally, every topological space that is first countable) is compactly generated.

\begin{lem} \label{l.cbm}
    Let $E$ be a completely regular Hausdorff topological space. Then the following assertions hold:
    \begin{enumerate}[\upshape (a)]
        \item \label{i.l.cbm.normingPair}
              $(C_b(E), \calM(E))$ is a norming dual pair with respect to the duality~\eqref{eq.duality}.

        \item \label{i.l.cbm.convergenceWeakTop}
              A sequence $(f_n)_n$ in $C_b(E)$ is $\sigma(C_b(E), \calM(E))$-convergent to a function $f \in C_b(E)$ 
              if and only if it is uniformly bounded and converges pointwise to $f$.

        \item \label{i.l.cbm.DunfordPettisProp}
              $C_b(E)$  has the Dunford--Pettis property with respect to $\calM(E)$.

        \item \label{i.l.cbm.ConsistencyStrictTop}
              The strict topology is a consistent topology on $C_b(E)$, i.e., $(C_b(E), \beta_0(E))' = \calM(E)$. In particular, it is coarser than the Mackey topology. Moreover, $\beta_0(E)$ coincides on $\|\argument\|_\infty$-bounded sets with the compact-open topology of uniform convergence on compact sets.

        \item \label{i.l.cbm.CompletenessStrictTop}
              The space $(C_b(E), \beta_0(E))$ is complete if and only if $E$ is compactly generated.
    \end{enumerate}
\end{lem}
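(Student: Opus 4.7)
The plan is to dispatch the five assertions in order, relying on standard measure-theoretic tools, Proposition~\ref{p.ALDP}, and two results of Sentilles for the statements about the strict topology. For \ref{i.l.cbm.normingPair}, I would verify the two norming conditions separately: the estimate $\|f\|_\infty \leq \sup_{\|\mu\|_{\mathrm{TV}}\leq 1}|\langle f,\mu\rangle|$ follows by testing against Dirac measures $\delta_x$ (which satisfy $\|\delta_x\|_{\mathrm{TV}}=1$), while the reverse inequality is the routine bound $|\int_E f\, \ud\mu| \leq \|f\|_\infty \|\mu\|_{\mathrm{TV}}$. For the second norming identity, the Jordan decomposition reduces the problem to positive Radon measures, after which complete regularity of $E$ combined with outer regularity of $|\mu|$ allows approximation of indicator functions of Borel sets in $L^1(|\mu|)$ by elements of $\Ball_{C_b(E)}$ via Urysohn's lemma. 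For \ref{i.l.cbm.convergenceWeakTop}, the forward direction uses the uniform boundedness principle (applicable because $\calM(E)$ norms $C_b(E)$ by~\ref{i.l.cbm.normingPair}) to obtain norm-boundedness, and evaluation against Dirac measures for pointwise convergence; the converse is an application of the dominated convergence theorem after splitting $\mu$ into real and imaginary, then positive and negative, parts.

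For \ref{i.l.cbm.DunfordPettisProp}, the plan is to invoke Proposition~\ref{p.ALDP}. Its two hypotheses can be verified directly: $\calM(E)$ is an AL-space because the total variation norm is additive on the positive cone of Radon measures, and the lattice operations on $C_b(E)$ are sequentially $\sigma(C_b(E),\calM(E))$-continuous because, by~\ref{i.l.cbm.convergenceWeakTop}, weak convergence of $(f_n)_n$ to $0$ is equivalent to uniform boundedness together with pointwise convergence, a pair of properties preserved under $f_n \mapsto |f_n|$.

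Finally, \ref{i.l.cbm.ConsistencyStrictTop} and \ref{i.l.cbm.CompletenessStrictTop} rest on the work of Sentilles~\cite{Sentilles1972}. Consistency of $\beta_0(E)$ with the duality $(C_b(E),\calM(E))$ is \cite[Theorem~4.5]{Sentilles1972}; the Mackey--Arens theorem then places $\beta_0(E)$ between $\sigma(C_b(E),\calM(E))$ and $\mu(C_b(E),\calM(E))$. The coincidence of $\beta_0(E)$ with the compact-open topology on $\|\argument\|_\infty$-bounded sets follows from a direct comparison of seminorms: for each compact $K\subseteq E$, the indicator $\mathbf{1}_K$ lies in $\calF_0(E)$ and yields $p_{\mathbf{1}_K}(f) = \|f\|_K$, while conversely every $\varphi \in \calF_0(E)$ is small off a suitable compact set $K'$, so on bounded sets $p_\varphi$ is dominated by a multiple of $\|\argument\|_{K'}$ up to an arbitrarily small error. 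The completeness characterization in \ref{i.l.cbm.CompletenessStrictTop} is the only substantive point and constitutes the main obstacle; I would simply cite Sentilles' corresponding theorem from \cite{Sentilles1972}, whose non-trivial implication exploits the compactly generated structure of $E$ to glue together continuous functions defined on compact subsets into a single element of $C_b(E)$.
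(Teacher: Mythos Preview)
Your proposal is correct and follows essentially the same route as the paper: part~\ref{i.l.cbm.convergenceWeakTop} via dominated convergence and the uniform boundedness principle (the paper invokes Proposition~\ref{p.weaklycontinuous}(a), which is the same thing), part~\ref{i.l.cbm.DunfordPettisProp} via Proposition~\ref{p.ALDP}, and parts~\ref{i.l.cbm.ConsistencyStrictTop}--\ref{i.l.cbm.CompletenessStrictTop} by citation. The only differences are cosmetic: you spell out~\ref{i.l.cbm.normingPair} and the bounded-sets comparison in~\ref{i.l.cbm.ConsistencyStrictTop} directly, whereas the paper dispatches these by citing \cite{Kunze2011} and \cite{Jarchow1981} (rather than \cite{Sentilles1972}) respectively.
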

\begin{proof}
    \ref{i.l.cbm.normingPair}:
    This is well known (see, e.g.,~\cite[Example~2.4]{Kunze2011}). \smallskip

    \ref{i.l.cbm.convergenceWeakTop}:
    If the sequence $(f_n)_n$ is uniformly bounded and converges pointwise to $f$, then it follows from the dominated convergence theorem that $f_n \to f$ with respect to $\sigma(C_b(E), \calM(E))$ as $n \to \infty$. For the converse, use Proposition \ref{p.weaklycontinuous}.\smallskip

    \ref{i.l.cbm.DunfordPettisProp}:
    Taking~\ref{i.l.cbm.convergenceWeakTop} into account, we see that the lattice operations on $C_b(E)$ are sequentially $\sigma(C_b(E),\calM(E))$-continuous. Since $\calM(E)$ is an AL-space, the claim follows from Proposition~\ref{p.ALDP}. \smallskip

    \ref{i.l.cbm.ConsistencyStrictTop}:
    See, for example,~\cite[Theorem~7.6.3]{Jarchow1981} for the fact that the strict topology is consistent on $C_b(E)$. Thus, it follows from the Mackey--Arens Theorem (see for example \cite[Theorem~5.113]{Aliprantis2006}) that $\beta_0(E)$ is coarser than the Mackey topology $\mu(C_b(E), \calM(E))$. Finally, the strict topology coincides on $\|\argument\|_\infty$-bounded sets with the compact-open topology of uniform convergence on compact sets by~\cite[Theorem~2.10.4]{Jarchow1981}. \smallskip

    \ref{i.l.cbm.CompletenessStrictTop}:
    This was proven independently by Sentilles~\cite[Theorem~7.1]{Sentilles1972} and Hoffmann-J{\o}rgensen~\cite[Theorem~7.1]{HoffmannJoergensen1972}.
\end{proof}

In the concrete situation of the norming dual pair $(C_b(E), \calM(E))$, Lemma~\ref{l.eberlein_smulian} together with Corollary~\ref{c.ultrafeller} yields the following result.

\begin{cor}\label{c.situationoncb}
    Let $E$ be a completely regular Hausdorff topological space and $S, T \in \calL(C_b(E))$ be $\sigma(C_b(E), \calM(E))$-compact. Then the following assertions hold:
    \begin{enumerate}[\upshape (a)]
        \item \label{i.c.situationcb.sequentialConvImprov}
              $T$ is sequentially $\sigma(C_b(E), \calM(E))$-$\beta_0(E)$-continuous, i.e., if $(f_n)_n$ is a uniformly bounded sequence in $C_b(E)$ that converges pointwise to some $f \in C_b(E)$, then $Tf_n \to Tf$ as $n \to \infty$ uniformly on compact subsets of $E$.

        \item \label{i.c.situationcb.abstractProductCompact}
              Assume, additionally, that $\calM(E)$ contains a sequence that separates points in $C_b(E)$. Then the product $TS$ is sequentially $\beta_0(E)$-compact, i.e., whenever $(f_n)_n$ is a bounded sequence in  $C_b(E)$, we may extract a subsequence $(f_{n_k})_k$ such that $ST f_{n_k}$ converges with respect to $\beta_0(E)$ to a continuous function as $k \to \infty$.

        \item \label{i.c.situationcb.cgsProductCompact}
              If $E$ is compactly generated and $\calM(E)$ contains a sequence that separates points in $C_b(E)$, then $ST$ is $\beta_0(E)$-compact and sequentially $\beta_0(E)$-compact.

        \item \label{i.c.situationcb.csProductCompact}
              If $E$ is compact, then $TS$ is $\|\argument\|_\infty$-compact. 
    \end{enumerate}
\end{cor}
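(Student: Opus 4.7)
The plan is to derive each of the four assertions from the abstract machinery of Section~\ref{sect.weakcompactness}, using Lemma~\ref{l.cbm} to translate between the dual pair language and the concrete topologies on $C_b(E)$. In all cases the Dunford--Pettis property of $C_b(E)$ with respect to $\calM(E)$, provided by Lemma~\ref{l.cbm}\ref{i.l.cbm.DunfordPettisProp}, supplies the input needed to apply Theorem~\ref{t.dpimproving} and Corollary~\ref{c.ultrafeller}.

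For part~\ref{i.c.situationcb.sequentialConvImprov}, the first step is to observe, via Lemma~\ref{l.cbm}\ref{i.l.cbm.convergenceWeakTop}, that uniform boundedness together with pointwise convergence is equivalent to $\sigma(C_b(E),\calM(E))$-convergence. Since $T$, being $\sigma(C_b(E),\calM(E))$-compact, is a fortiori countably compact, Theorem~\ref{t.dpimproving} yields $Tf_n\to Tf$ with respect to the Mackey topology. The conclusion then follows because $\beta_0(E)$ is coarser than the Mackey topology and, on $\|\argument\|_\infty$-bounded sets, coincides with the compact-open topology, both by Lemma~\ref{l.cbm}\ref{i.l.cbm.ConsistencyStrictTop}.

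For part~\ref{i.c.situationcb.abstractProductCompact}, the additional hypothesis that $\calM(E)$ contains a countable point-separating set is precisely the assumption of Lemma~\ref{l.eberlein_smulian}\ref{i.l.eberlein_smulian.smulian}, which upgrades the $\sigma$-compactness of $S$ and $T$ to sequential $\sigma$-compactness. Corollary~\ref{c.ultrafeller}\ref{i.c.ultrafeller.TSSequentiallyMackeyCompact} then makes the product sequentially $\mu$-compact, hence sequentially $\beta_0(E)$-compact, and the limit function lies automatically in $C_b(E)$. For part~\ref{i.c.situationcb.cgsProductCompact} one additionally invokes Corollary~\ref{c.ultrafeller}\ref{i.c.ultrafeller.TSMackeyCompact}, and this is where I expect the only subtle step: verifying quasi-completeness of $(C_b(E),\mu(C_b(E),\calM(E)))$. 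By the Bourbaki--Robertson argument recalled after Lemma~\ref{l.eberlein_smulian}, it suffices to exhibit \emph{some} complete consistent topology, and compact generatedness of $E$ delivers exactly this via Lemma~\ref{l.cbm}\ref{i.l.cbm.CompletenessStrictTop} applied to $\beta_0(E)$. The resulting Mackey-compactness of the product immediately implies $\beta_0(E)$-compactness, while sequential $\beta_0(E)$-compactness is inherited from part~\ref{i.c.situationcb.abstractProductCompact}.

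For part~\ref{i.c.situationcb.csProductCompact}, compactness of $E$ collapses both $\beta_0(E)$ and $\mu(C_b(E),\calM(E))$ to the norm topology, as noted in the discussion preceding Lemma~\ref{l.cbm}. No separating-sequence hypothesis is needed here because the classical Eberlein--\v{S}mulian theorem (Proposition~\ref{p.es}) automatically turns $\sigma(C(E),C(E)^*)$-compactness into sequential and countable $\sigma$-compactness, and the Banach space $(C(E),\|\argument\|_\infty)$ is trivially quasi-complete, so Corollary~\ref{c.ultrafeller}\ref{i.c.ultrafeller.TSMackeyCompact} delivers norm-compactness of the product directly. The main obstacle throughout is the bookkeeping of the tower $\sigma(X,Y)\subset\beta_0(E)\subset\mu(X,Y)$ together with the Bourbaki--Robertson step establishing quasi-completeness of the Mackey topology at the right moment; beyond this, no new conceptual ingredient is required.
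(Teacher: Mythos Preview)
Your proposal is correct and follows essentially the same route as the paper. The only noteworthy difference is that in part~\ref{i.c.situationcb.cgsProductCompact} you make explicit the verification of quasi-completeness of $(C_b(E),\mu(C_b(E),\calM(E)))$ via the Bourbaki--Robertson lemma and Lemma~\ref{l.cbm}\ref{i.l.cbm.CompletenessStrictTop}, whereas the paper's proof simply says ``as in~\ref{i.c.situationcb.abstractProductCompact}, using part~\ref{i.c.ultrafeller.TSMackeyCompact} of Corollary~\ref{c.ultrafeller}'' and leaves this step implicit; your added detail is a genuine improvement in clarity. In part~\ref{i.c.situationcb.csProductCompact} the paper instead applies Corollary~\ref{c.ultrafeller}\ref{i.c.ultrafeller.TSSequentiallyMackeyCompact} and then uses that sequential compactness coincides with compactness in the Banach space $C(E)$, but your route through Corollary~\ref{c.ultrafeller}\ref{i.c.ultrafeller.TSMackeyCompact} is equally valid.
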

\begin{proof}
    \ref{i.c.situationcb.sequentialConvImprov}:
    Since by Lemma~\ref{l.cbm}  $C_b(E)$ has the Dunford--Pettis property with respect to $\calM(E)$, Theorem~\ref{t.dpimproving} implies that $T$ is sequentially $\sigma(C_b(E), \calM(E))$-$\mu(C_b(E), \calM(E))$-continuous. As the strict topology $\beta_0(E)$ is coarser than  the Mackey topology $\mu(C_b(E), \calM(E))$, \ref{i.c.situationcb.sequentialConvImprov} follows. \smallskip

    \ref{i.c.situationcb.abstractProductCompact}:
    By Lemma~\ref{l.eberlein_smulian}\ref{i.l.eberlein_smulian.smulian} $S$ is sequentially $\sigma(C_b(E), \calM(E))$-compact, and Corollary~\ref{c.ultrafeller}\ref{i.c.ultrafeller.TSSequentiallyMackeyCompact} implies  sequential $\mu(C_b(E), \calM(E))$-compactness of $TS$. As $\beta_0(E) \subset \mu(C_b(E), \calM(E))$, the statement follows. \smallskip

    \ref{i.c.situationcb.cgsProductCompact}:
    This follows as in~\ref{i.c.situationcb.abstractProductCompact} by using part \ref{i.c.ultrafeller.TSMackeyCompact} of Corollary~\ref{c.ultrafeller} instead of~\ref{i.c.ultrafeller.TSSequentiallyMackeyCompact}. \smallskip

    \ref{i.c.situationcb.csProductCompact}:
    Since $E$ is compact, we have $C_b(E)^* \cong \calM(E)$. This implies that the $\sigma(C_b(E), \calM(E))$-topology coincides with 
    $\sigma(C_b(E), C_b(E)^*)$ and Proposition \ref{p.es} implies that the notions of relative weak sequential compactness and relative weak
    compactness of sets coincide. Hence, $S$ is sequentially $\sigma(C_b(E), \calM(E))$-compact and 
    Corollary~\ref{c.ultrafeller}\ref{i.c.ultrafeller.TSSequentiallyMackeyCompact} implies the sequential $\mu(C_b(E), \calM(E))$-compactness of 
    $TS$. Noting that $\beta_0(E)$, $\mu(C_b(E), \calM(E))$ and the norm topology coincide, the statement follows.
\end{proof}

The existence of a sequence in $\calM(E)$ that separates the points
in $C_b(E)$, which is assumed in parts (b) and (c) of Corollary
\ref{c.situationoncb}, is certainly fulfilled whenever $E$ is separable. In Appendix \ref{apx:separability}, we discuss if the separability of $E$ is also necessary for this.

The next example shows that parts~\ref{i.c.situationcb.abstractProductCompact} and~\ref{i.c.situationcb.cgsProductCompact} of Corollary~\ref{c.situationoncb} are not true in general without the separability assumption.

\begin{example}
\label{ex.nonseq}
    Let $E \coloneqq \R$, endowed with the discrete metric. Then $E$ is compactly generated and the compact subsets of $E$ are precisely the finite subsets. In this case, we may identify $C_b(E) \cong \ell^\infty(E)$ and $\calM(E)\cong \ell^1(E)$. We note that $\sigma(C_b(E), \calM(E))$ is merely the weak$^*$-topology (in the Banach space sense) on $\ell^\infty(E)$. It follows from the Banach--Alaoglu theorem that the identity map $\id \colon C_b(E)\to C_b(E)$ is $\sigma(C_b(E), \calM(E))$-compact. As the compact subsets of $E$ are all finite, the $\sigma(C_b(E), \calM(E))$-topology coincides with the strict topology $\beta_0(E)$, and thus, assertion~\ref{i.c.situationcb.sequentialConvImprov} in Corollary~\ref{c.situationoncb} is trivially satisfied. However, $\calM(E)\cong \ell^1(E)$ does not contain a sequence that separates the points in $C_b(E)$. And, indeed, in this situation $\id^2 = \id$ is \emph{not} sequentially $\beta_0(E)$-compact (even though it is $\beta_0(E)$-compact). 
\end{example}

Next, we show under appropriate assumptions on the space $E$ that the strong Feller property and the ultra Feller property of a kernel operator are equivalent to the $\sigma(C_b(E), \calM(E))$-compactness and the $\beta_0(E)$-compactness, respectively. Let us first recall the notion of kernels.

\begin{defn}
A \emph{kernel} on $E$ is a map $k \colon E \times \calB(E)\to \K$ with the following properties:
\begin{enumerate}[(i)]
    \item
          for every $A\in \calB(E)$, the map $x\mapsto k(x,A)$ is $\calB(E)$-measurable.

    \item
          for every $x\in E$, it is $k(x, \argument) \in \calM(E)$.

    \item
          $\|k\|\coloneqq \sup_{x\in E}|k|(x, E) <\infty$.
\end{enumerate}
In what follows, a kernel $k$ on $E$ is said to be
\begin{enumerate}[(a)]
    \item a \emph{strong Feller kernel} if it is \emph{setwise continuous}, meaning that for every $A \in \calB(E)$, the scalar-valued map $x \mapsto k(x, A)$ is continuous on $E$. Note
    that this is equivalent to the $\sigma(\calM(E), B_b(E))$-continuity of the map $x\mapsto k(x, \cdot)$.
    \item It is called an \emph{ultra Feller kernel} if the map
              \begin{align*}
                  E \to \calM(E), \quad x \mapsto k(x, \argument)
              \end{align*}
    is continuous with respect to the total variation norm on $\calM(E)$.
\end{enumerate}
\end{defn}

Evidently, each ultra Feller kernel is a strong Feller kernel. The following lemma shows that one can associate a unique bounded operator on $B_b(E)$ to each kernel on $E$. The proof is straightforward and will thus be omitted. 

\begin{lem} \label{l.kerneloperator}
    Let $k$ be a kernel on $E$. Then there exists a bounded operator $T_k \colon B_b(E) \to B_b(E)$ such that 
    $(T_k\one_A)(x) = k(x , A)$ for all $x\in E$ and $A \in \calB(E)$. Moreover, the identity
    \begin{align*}
        [T_k f](x) = \int_E f(y) \, k(x, \ud y)
    \end{align*}
    holds for all  $x \in E$ and $f \in B_b(E)$. In this situation, $k$ is a strong Feller kernel if and only if one has $T_kf \in C_b(E)$ for all $f \in B_b(E)$. 
\end{lem}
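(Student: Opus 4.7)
The plan is to construct $T_k$ by the standard monotone-class / measure-theoretic approximation scheme, then deduce the strong Feller characterization by approximating bounded Borel functions uniformly by simple ones.

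First I would define $T_k f(x) \coloneqq \int_E f(y)\, k(x,\ud y)$ pointwise. For each fixed $x$, the measure $k(x,\argument)$ lies in $\calM(E)$, so the integral is well-defined for every bounded Borel $f$, and the bound $\modulus{T_k f(x)} \le \norm{f}_\infty \, \lvert k \rvert(x,E) \le \norm{k}\, \norm{f}_\infty$ shows that $T_k f$ is bounded and that $T_k$, once known to map into $B_b(E)$, satisfies $\norm{T_k}_{\calL(B_b(E))} \le \norm{k}$. The identity $(T_k \one_A)(x) = k(x,A)$ is immediate from the definition of the integral, so measurability of $T_k \one_A$ is exactly property (i) of a kernel. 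Linearity extends this to all simple Borel functions. For a general $f \in B_b(E)$, pick a sequence $(s_n)_n$ of simple Borel functions with $s_n \to f$ uniformly; then the norm estimate gives $\norm{T_k s_n - T_k f}_\infty \le \norm{k}\, \norm{s_n - f}_\infty \to 0$, and a uniform limit of (bounded) measurable functions is measurable, so $T_k f \in B_b(E)$. This establishes the existence, the boundedness, and the integral representation of $T_k$; uniqueness of the operator with this representation follows from the fact that the integral against the fixed measure $k(x,\argument)$ is uniquely determined by the function $f$.

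For the equivalence, the direction $(\Leftarrow)$ is trivial: if $T_k f \in C_b(E)$ for every $f \in B_b(E)$, then in particular $x \mapsto (T_k \one_A)(x) = k(x,A)$ is continuous for every $A \in \calB(E)$, so $k$ is a strong Feller kernel. For $(\Rightarrow)$, assume $k$ is setwise continuous. Then $T_k \one_A \in C_b(E)$ for each $A$, and by linearity $T_k s \in C_b(E)$ for every simple Borel function $s$. Given arbitrary $f \in B_b(E)$, choose simple Borel $s_n$ with $\norm{s_n - f}_\infty \to 0$; the same norm estimate used above yields $\norm{T_k s_n - T_k f}_\infty \to 0$. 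Since the uniform limit of a sequence in $C_b(E)$ lies in $C_b(E)$, we conclude $T_k f \in C_b(E)$.

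No step is really an obstacle here; the only thing to keep an eye on is the measurable-approximation argument, and the key observation is that the operator norm bound $\norm{T_k f}_\infty \le \norm{k}\,\norm{f}_\infty$ transfers uniform convergence of the inputs into uniform convergence of the outputs. This is what allows one to lift the two relevant properties (Borel measurability in the construction of $T_k$, continuity in the strong Feller characterization) from indicator functions to all of $B_b(E)$ by a single common approximation step.
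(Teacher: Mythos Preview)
Your argument is correct and is precisely the standard approximation-by-simple-functions scheme one expects here; the paper itself omits the proof as ``straightforward'', and your write-up supplies exactly the routine details the authors had in mind.
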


We call the operator from the lemma above, the \emph{kernel operator} associated to the kernel $k$. Next, we consider the restrictions of kernel operators to closed subspaces of the bounded measurable functions. Let $X$ be a closed linear subspace of $B_b(E)$ that is norming for $\calM(E)$, e.g., $X = C_b(E)$ or, if $E$ is locally compact, $X = C_0(E)$. A bounded operator $T$ on $X$ is called a \emph{kernel operator} if there exists a kernel $k$ such that $Tf=T_kf$ for all $f\in X$. Note that, by Lemma~\ref{l.kerneloperator} and the fact that $X$ separates
the points in $\calM(E)$, 
$k$ is uniquely determined by $T$ and will be called \emph{the kernel associated with $T$}. 
Conversely, $T$ is called the \emph{kernel operator associated with $k$}. We also point out that the fact that $T$ maps functions $f \in X$ to $X$ generally does not imply that $k(\argument, A) \in X$ for all $A\in \calB(E)$. 

\begin{defn}
    Let $T \colon C_b(E) \to C_b(E)$ be a kernel operator with kernel $k$. We consider the following two notions: 
    \begin{enumerate}[(a)]
        \item
              $T$ is called a \emph{strong Feller operator} if its kernel $k$ is a strong Feller kernel. 

        \item
              $T$ is called an \emph{ultra Feller operator} if its kernel $k$ is an ultra Feller kernel. 
    \end{enumerate}
\end{defn}

\begin{rem}
    We point out that while the notions of strong and ultra Feller
    operators introduced above are rather standard, the notion of a
    \emph{Feller operator} is not commonly used. There is, however,
    the notion of a Feller semigroup, even though its definition
    is inconsistent in the literature (see the comments 
    in \cite[p.~241]{Rogers2000}). 
    The definition from \cite{Jacob2001} is as follows:
    If $E$ is a locally compact metric space, then \emph{a Feller semigroup} is a strongly continuous semigroup $(T(t))_{t\geq 0}$
    of positive contractions on the space $C_0(E)$ of continuous functions on $E$ that vanish at infinity. 
    In this context, the strong and ultra Feller properties arise
    as additional properties that the operators $T(t)$ may (or may not) have. However, it can happen
    that a kernel operator (and even a semigroup of kernel operators)
    satisfies the strong or ultra Feller property without leaving
    the space $C_0(E)$ invariant, see \cite{Bogdan2024} for an example. 
\end{rem}

Evidently, each ultra Feller kernel operator has the strong Feller property. As shown in the following theorem, for a large class of spaces $E$, $\sigma(C_b(E), \calM(E))$-compact operators are precisely strong Feller operators. This result generalizes \cite[Theorem~1(2)]{Sentilles1969}, where this characterization was established in the special case of a locally compact space $E$. 

\begin{thm}\label{t.strongfellerop}
    Let $E$ be a completely regular Hausdorff topological space that is compactly generated and $T\in \calL(C_b(E))$. Then the following assertions are equivalent:
    \begin{enumerate}[\upshape (i)]
        \item \label{i.t.strongfellerop.weakCompact}
              $T$ is $\sigma (C_b(E), \calM(E))$-compact.
        \item \label{i.t.strongfellerop.strongFeller}
              $T$ is a strong Feller operator.
    \end{enumerate}
\end{thm}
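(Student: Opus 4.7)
My plan is to prove both directions via the abstract characterization in Proposition~\ref{p.weakcomp}: $T \in \calL(C_b(E))$ is $\sigma(C_b(E), \calM(E))$-compact if and only if $(T')^\ast \calM(E)^\ast \subseteq C_b(E)$, where $C_b(E) \hookrightarrow \calM(E)^\ast$ via the canonical isometric embedding $g \mapsto \int g\,\ud\argument$. The same formula gives an embedding $B_b(E) \hookrightarrow \calM(E)^\ast$, which is what will translate between the functional-analytic condition and the kernel-theoretic strong Feller property.

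For the direction \ref{i.t.strongfellerop.weakCompact} $\Rightarrow$ \ref{i.t.strongfellerop.strongFeller}, I would apply the characterization to indicator functions: for each $A \in \calB(E)$ the element $\one_A$ lies in $B_b(E) \subseteq \calM(E)^\ast$, so $(T')^\ast \one_A \in C_b(E)$. Evaluating at $\delta_x \in \calM(E)$ gives $[(T')^\ast \one_A](\delta_x) = \one_A(T'\delta_x) = (T'\delta_x)(A)$, so $x \mapsto (T'\delta_x)(A)$ belongs to $C_b(E)$. Setting $k(x, A) \coloneqq (T'\delta_x)(A)$ then defines a strong Feller kernel, and the direct computation $(Tf)(x) = \langle Tf, \delta_x\rangle = \langle f, T'\delta_x\rangle$ identifies $T$ as its kernel operator. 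Notably, this direction uses neither compact generation nor any completeness of $E$.

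For the reverse direction \ref{i.t.strongfellerop.strongFeller} $\Rightarrow$ \ref{i.t.strongfellerop.weakCompact}, I would verify $(T')^\ast \phi \in C_b(E)$ for each $\phi \in \calM(E)^\ast$ with the natural candidate $g_\phi(x) \coloneqq \phi(k(x, \argument))$, which is bounded by $\|\phi\|\cdot\|k\|$. The critical step is to show that $g_\phi$ is continuous; compact generation of $E$ reduces this to continuity of $g_\phi|_L$ for each compact $L \subseteq E$. Strong Feller gives that $\Psi \colon L \to \calM(E)$, $x \mapsto k(x, \argument)$ is continuous into the setwise topology $\sigma(\calM(E), B_b(E))$, so $\Psi(L)$ is bounded and $\sigma(\calM(E), B_b(E))$-compact. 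Since $\calM(E)$ is an AL-space, the classical Vitali--Hahn--Saks/Dieudonn\'e theory of uniform countable additivity upgrades this to weak compactness of $\Psi(L)$. On the weakly compact set $\Psi(L)$ the coarser Hausdorff topology $\sigma(\calM(E), B_b(E))$ must coincide with the weak topology, so $\phi$ (which is weakly continuous) is also $\sigma(\calM(E), B_b(E))$-continuous on $\Psi(L)$; hence $g_\phi|_L = \phi \circ \Psi$ is continuous.

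To close the argument I need the identity $\phi(T'\mu) = \int g_\phi \,\ud\mu$ for every $\mu \in \calM(E)$. Using tightness of Radon $\mu$, I would choose compacts $L_n \subseteq E$ with $|\mu|(E \setminus L_n) \to 0$. On each $L_n$ the map $x \mapsto k(x, \argument)$ is weakly scalarly measurable with weakly compact range, hence Pettis integrable with respect to $\mu|_{L_n}$; evaluation of the resulting Pettis integral against $\one_A \in B_b(E)$ identifies it as $T'(\mu|_{L_n})$, giving $\phi(T'(\mu|_{L_n})) = \int_{L_n} g_\phi \,\ud\mu$. As $n \to \infty$, $T'(\mu|_{L_n}) \to T'\mu$ setwise by dominated convergence, hence weakly in $\calM(E)$ by Nikodym's theorem, so $\phi(T'(\mu|_{L_n})) \to \phi(T'\mu)$; simultaneously $\int_{L_n} g_\phi \,\ud\mu \to \int g_\phi \,\ud\mu$ by bounded convergence. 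The identity follows, showing $(T')^\ast \phi = g_\phi \in C_b(E)$, and Proposition~\ref{p.weakcomp} yields the $\sigma(C_b(E), \calM(E))$-compactness of $T$. The main technical obstacle is the AL-space step---that bounded $\sigma(\calM(E), B_b(E))$-compact subsets of $\calM(E)$ are weakly compact---which relies on the classical uniform countable additivity machinery.
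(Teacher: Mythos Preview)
Your direction \ref{i.t.strongfellerop.weakCompact} $\Rightarrow$ \ref{i.t.strongfellerop.strongFeller} matches the paper's proof. For the converse, your argument for the continuity of $g_\phi|_L$ --- observing that on the weakly compact set $\Psi(L)$ the coarser Hausdorff topology $\sigma(\calM(E), B_b(E))$ must agree with the weak topology, so that $\phi$ restricts to a setwise-continuous map --- is more conceptual than the paper's route, which instead approximates $\phi$ by a bounded net $(f_\lambda) \subset C_b(E)$, passes to a weak-$*$ limit $g$ in $L^\infty(\nu)$, and identifies $g_\phi|_L$ with $(T_k g)|_L$. Both, however, rest on the same underlying fact (what you cite as ``Vitali--Hahn--Saks/Dieudonn\'e'' and the paper cites as \cite[Theorem~4.7.25]{Bogachev2007}): a bounded $\sigma(\calM(E), B_b(E))$-compact set $\calS \subset \calM(E)$ is uniformly $\nu$-additive for some Radon $\nu$, hence sits as a uniformly integrable and therefore weakly compact subset of $L^1(\nu) \hookrightarrow \calM(E)$.

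There is, however, a genuine gap in your final paragraph. Proposition~\ref{p.weakcomp} applies only to operators already known to lie in $\calL(C_b(E), \sigma(C_b(E), \calM(E)))$, so before you can speak of $T'\mu$ or $(T')^*\phi$ you must show that the formal adjoint $S\mu(A) = \int_E k(x, A)\,\ud\mu(x)$ is a \emph{Radon} measure. Your appeal to Nikod\'ym's theorem does not deliver this: Nikod\'ym guarantees only that the setwise limit is countably additive, not that it is inner regular by compacts, and setwise limits of Radon measures on a general completely regular space need not be Radon. The paper handles this separately, proving inner regularity of $S\mu$ directly from the uniform $\nu$-additivity on compact pieces of the support of $\mu$. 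In your framework the clean fix is to note that once the Pettis integrals $\nu_n \coloneqq S(\mu|_{L_n}) \in \calM(E)$ are constructed, they form a \emph{norm} Cauchy sequence, since $\|\nu_m - \nu_n\|_{\mathrm{TV}} \le \|k\|\,|\mu|(L_m \triangle L_n)$; the norm limit in $\calM(E)$ is then automatically Radon and coincides setwise with $S\mu$. This simultaneously establishes the $\sigma(C_b(E),\calM(E))$-continuity of $T$ and, upon pairing with $\phi$, the identity $\phi(T'\mu) = \int_E g_\phi\,\ud\mu$.
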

\begin{proof}
    \impliesProof{i.t.strongfellerop.weakCompact}{i.t.strongfellerop.strongFeller}
    By definition, an $\sigma(C_b(E), \calM(E))$-compact operator $T$ is also $\sigma(C_b(E), \calM(E))$-continuous, whence we may consider 
    the $\sigma(C_b(E), \calM(E))$-adjoint $S=T'$ of $T$ on $\calM(E)$. We put $k(x, \argument) \coloneqq T'\delta_x \in \calM(E)$. 
    Since $C_b(E)$ is norming for $\calM(E)$, it follows that
    \[
        |k|(x,E) =\sup\{ |Tf(x)| : f \in C_b(E), \|f\|_\infty \leq 1 \}.
    \]
    Taking the supremum over all $x\in E$, it follows that $\|k\| =\|T\| <\infty$. Finally, note that
    \[
        k(x,A) = \langle \one_A, T'\delta_x\rangle_{\calM^*, \calM} =
        \langle S^*\one_A, \delta_x\rangle_{\calM^*, \calM} = [S^*\one_A](x).
    \]
    Since $T$ is $\sigma(C_b(E), \calM(E))$-compact, it follows from Proposition~\ref{p.weakcomp} that $k(\argument, A) = S^*\one_A \in C_b(E)$. Thus, $T = S^*\rvert_{B_b(E)}$ is a strong Feller operator by Lemma~\ref{l.kerneloperator}. \medskip
    
   \impliesProof{i.t.strongfellerop.strongFeller}{i.t.strongfellerop.weakCompact}
    Let $T$ be a kernel operator with a setwise continuous kernel. Given $\mu \in \calM(E)$, we define
    \[
        [S\mu](A) = \int_E k(x, A) \, \ud \mu (x),
    \]
    where $k$ denotes the kernel associated to $T$. Making use of the dominated convergence theorem, it is easy to see that $S\mu$ is a Borel measure. Let us prove that it is actually a Radon measure. To that end, let a set $A\in \calB(E)$ and $\eps>0$ be given. We first pick $L \subset E$ compact such that $|\mu|(E\setminus L) \leq \eps/(2\|k\|)$. Since $k$ is setwise continuous, the map $x\mapsto k(x, \argument)$ is $\sigma(\calM(E), B_b(E))$-continuous, whence the set
    \begin{equation}\label{eq.sets}
        \calS \coloneqq \{k(x, \argument) : x\in L\}
    \end{equation}
    is $\sigma(\calM(E), B_b(E))$-compact. It follows from \cite[Theorem~4.7.25]{Bogachev2007} that there exists a non-negative Radon measure $\nu$, such that $\calS$ is uniformly $\nu$-additive. Thus, we find $\delta>0$ such that for all $x\in L$ and $B\in\calB(E)$ with $\nu(B)\leq \delta$
    we have $|k|(x, B)\leq \eps/(2|\mu|(E))$.

    Now pick $K\subset A$ compact with $\nu (A\setminus K) \leq \delta$. It follows that
    \[
        |S\mu|(A\setminus K) \leq \int_L |k|(x, A\setminus K)\, \ud|\mu|(x) + \int_{E\setminus L} \|k\| \, \ud|\mu|(x)
        \leq \eps,
    \]
    which proves that $S\mu$ is indeed a Radon measure. It is clear from the definition that $\langle Tf, \mu\rangle = \langle f, S\mu\rangle$, whence $T^*\calM(E) \subset \calM(E)$ and $T^*|_{\calM(E)} = S$. By Proposition~\ref{p.weaklycontinuous}, $T$ is $\sigma (C_b(E), \calM(E))$-continuous and $T' = S$.

    Taking Proposition~\ref{p.weakcomp} into account, it remains to prove that $S^*\calM(E)^*\subset C_b(E)$. To that end, fix $\varphi\in \calM(E)^*$ and put $f(x) = \langle S^*\varphi, \delta_x\rangle$. We thus have to prove that $f$ is a continuous function. As $E$ is compactly generated, it suffices to prove that $f$ is continuous on any compact subset of $E$. So fix a compact set $L$ and again consider the set $\calS$ from~\eqref{eq.sets}, together with the measure $\nu$, obtained from~\cite[Theorem~4.7.25]{Bogachev2007}.

    By the Radon--Nikodym Theorem, for any $x \in L$, we find a function $g_x\in L^1(\nu)$ with $k(x, \ud y) = g_x\nu(\ud y)$. By Proposition~\ref{p.dense}, there is a bounded net $(f_\lambda)_\lambda$ that converges to $\varphi$ with respect to $\sigma(\calM(E)^*, \calM(E))$. Identifying a bounded measurable function with its equivalence class modulo equality $\nu$-almost everywhere, we may consider $(f_\lambda)_\lambda$ as a bounded net in $L^\infty(\nu)$. By the Banach--Alaoglu theorem, passing to a subnet, we may and shall assume that $(f_\lambda)_\lambda$ converges to some element $[g] \in L^\infty(E, \nu)$. We pick any bounded representative $g$. Then, for every $x\in L$, we have
    \begin{align*}
        f(x) = \langle S^*\varphi, \delta_x\rangle_{\calM^*, \calM} & = \langle \varphi, k(x, \argument)\rangle_{\calM^*, \calM} = \lim_\lambda \langle f_\lambda, k(x, \argument)\rangle_{B_b,\calM} \\
        & = \lim_\lambda \langle f_\lambda, g_x\rangle_{L^\infty(\nu), L^1(\nu)}
        = \langle g, g_x\rangle_{L^\infty(\nu), L^1(\nu)} \\
        & = \langle g, S\delta_x\rangle_{B_b, \calM} =(Tg)(x).
    \end{align*}
    As $k$ is setwise continuous, $Tg$ is a continuous function. This proves that $f$ is indeed continuous on $L$.
\end{proof}

\begin{rem}
    Making use of Theorem~\ref{t.strongfellerop}, we can improve \cite[Proposition~A.2]{Gerlach2023}: Let $E$ be a completely regular Hausdorff topological space that is compactly generated, let $T$ be a strong Feller operator on $C_b(E)$ and let $S \coloneqq T'$. Then $T$ is $\sigma(C_b(E), \calM(E))$-compact by Theorem~\ref{t.strongfellerop}, and it follows from Proposition~\ref{p.weakcomp} that $S^* \calM(E)^* \subset C_b(E)$, i.e, the conclusion of \cite[Proposition~A.2]{Gerlach2023} holds.  
    
    However, \cite[Proposition~A.2]{Gerlach2023} requires stronger assumptions, namely that $E$ is a Polish space and that $T$ is ultra Feller. 
\end{rem}

We now connect the ultra Feller property of a kernel operator to its compactness in the strict topology $\beta_0(E)$. The main ingredient in the proof of the following theorem is the Arzelà--Ascoli theorem. 

\begin{thm}\label{t.ultrafellerop}
    Let $E$ be a completely regular Hausdorff topological space that is compactly generated, and let $T \in \calL(C_b(E))$ be a bounded linear operator. Then the following assertions are equivalent:
    \begin{enumerate}[\upshape (i)]
        \item \label{i.c.ultrafellerop.strictCompact}
            $T$ is $\beta_0(E)$-compact;
        \item \label{i.c.ultrafellerop.ultraFeller}
            $T$ is an ultra Feller operator.
    \end{enumerate}
\end{thm}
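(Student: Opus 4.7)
The plan is to reduce both implications to an Arzelà--Ascoli argument on each compact subset $K \subseteq E$, tying the ultra Feller property to equicontinuity through the identity
\[
    \|k(x, \argument) - k(y, \argument)\|_{\mathrm{TV}} = \sup_{\|f\|_\infty \leq 1} |Tf(x) - Tf(y)|,
\]
which is valid because $C_b(E)$ is norming for $\calM(E)$. The other ingredient is Lemma~\ref{l.cbm}\ref{i.l.cbm.ConsistencyStrictTop}: on $\|\argument\|_\infty$-bounded subsets of $C_b(E)$, the strict topology $\beta_0(E)$ coincides with the compact-open topology. Consequently, relative $\beta_0(E)$-compactness of a bounded set is controlled by the behaviour of restrictions to compact subsets of $E$.

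For \ref{i.c.ultrafellerop.strictCompact} $\Rightarrow$ \ref{i.c.ultrafellerop.ultraFeller}: since $\beta_0(E)$ is finer than $\sigma(C_b(E), \calM(E))$, any $\beta_0(E)$-compact operator is $\sigma(C_b(E), \calM(E))$-compact, hence a strong Feller operator by Theorem~\ref{t.strongfellerop}, so a kernel operator with setwise continuous kernel $k$. For a fixed compact $K \subseteq E$, the set $\{Tf|_K : \|f\|_\infty \leq 1\}$ is relatively compact in $C(K)$, and the Arzelà--Ascoli theorem yields equicontinuity. The identity above then shows that $y \mapsto k(y, \argument)$ is continuous at each $x \in K$ with respect to the total variation norm. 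Since $E$ is compactly generated, continuity of $x \mapsto k(x, \argument)$ on every compact subset lifts to continuity on all of $E$, i.e., $T$ is ultra Feller.

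For \ref{i.c.ultrafellerop.ultraFeller} $\Rightarrow$ \ref{i.c.ultrafellerop.strictCompact}: an ultra Feller operator is strong Feller, hence $\sigma(C_b(E), \calM(E))$-continuous by Theorem~\ref{t.strongfellerop}, and therefore $\beta_0(E)$-continuous by Proposition~\ref{p.weaklycontinuous}. For any compact $K \subseteq E$, the bound $|Tf(x) - Tf(y)| \leq \|k(x, \argument) - k(y, \argument)\|_{\mathrm{TV}}$ together with the ultra Feller property makes the family $\{Tf|_K : \|f\|_\infty \leq 1\}$ equicontinuous and uniformly bounded, hence relatively compact in $C(K)$ by Arzelà--Ascoli. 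Given a net in $T(\Ball_{C_b(E)})$, Tychonoff's theorem applied to the product $\prod_{K} \overline{\{Tf|_K : \|f\|_\infty \leq 1\}}^{C(K)}$ over all compact $K \subseteq E$ lets us pass to a subnet whose restriction to every compact $K$ converges in $C(K)$. The limits form a consistent family $(g_K)_K$ and thus define a function $g$ on $E$ with $g|_K = g_K$ continuous; by compact generation $g$ is continuous on $E$, and $\|g\|_\infty \leq \|T\|$ places $g$ in $C_b(E)$. The subnet then converges to $g$ in the compact-open topology, which on the bounded set $T(\Ball_{C_b(E)}) \cup \{g\}$ agrees with $\beta_0(E)$. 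Hence $T(\Ball_{C_b(E)})$ is $\beta_0(E)$-relatively compact.

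The main technical obstacle lies in the second direction: the ultra Feller property delivers relative compactness on each compact slice, but one must reassemble these into $\beta_0(E)$-relative compactness on the whole space. Here the compact generation of $E$ does the essential work, converting compatible families of continuous restrictions into a genuine bounded continuous function on $E$, while the coincidence of $\beta_0(E)$ with the compact-open topology on bounded sets ensures that convergence in the product is $\beta_0(E)$-convergence in $C_b(E)$.
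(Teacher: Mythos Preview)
Your proof is correct and takes essentially the same route as the paper: both directions rest on the identity $\|k(x,\argument)-k(y,\argument)\|_{\mathrm{TV}}=\sup_{\|f\|_\infty\le 1}|Tf(x)-Tf(y)|$, Arzel\`a--Ascoli on each compact $K$, the coincidence of $\beta_0(E)$ with the compact-open topology on bounded sets, and compact generation. The only difference is that in \ref{i.c.ultrafellerop.ultraFeller}~$\Rightarrow$~\ref{i.c.ultrafellerop.strictCompact} the paper simply invokes the Arzel\`a--Ascoli theorem for the compact-open topology on $C(E)$ (equicontinuity on compacts plus pointwise boundedness gives relative compactness), whereas you unpack this by hand via Tychonoff and reassemble the limit function explicitly; also, your detour through $\beta_0(E)$-continuity of $T$ is not needed.

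One small point of care: your last sentence passes from ``every net in $T\Ball_{C_b(E)}$ has a $\beta_0(E)$-convergent subnet in $C_b(E)$'' to ``$T\Ball_{C_b(E)}$ is relatively $\beta_0(E)$-compact'', and this implication is not automatic in an arbitrary topological space. Here it is justified either by invoking completeness of $(C_b(E),\beta_0(E))$ (Lemma~\ref{l.cbm}\ref{i.l.cbm.CompletenessStrictTop}, which uses compact generation), so that totally bounded sets have compact closure, or---more in the spirit of your argument---by observing that your Tychonoff construction actually exhibits a $\beta_0(E)$-compact subset of $C_b(E)$ containing $T\Ball_{C_b(E)}$: the closure of $\{(Tf|_K)_K:\|f\|_\infty\le 1\}$ in $\prod_K C(K)$ consists of consistent bounded families and hence sits inside the image of $\|T\|\Ball_{C_b(E)}$ under the restriction embedding, which is a homeomorphism for the compact-open topology.
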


\begin{proof}
    \impliesProof{i.c.ultrafellerop.strictCompact}{i.c.ultrafellerop.ultraFeller}
    If $T$ is $\beta_0(E)$-compact, then $T$ is $\sigma(C_b(E), \calM(E))$-compact and, therefore, a strong Feller operator by
    Theorem~\ref{t.strongfellerop}. Hence, it is only left to show that the kernel $k$ of $T$ is continuous with respect to the total variation norm.
    Since $E$ is compactly generated, it is sufficient to show that $k$ is continuous on every compact subset of $E$. 
    So let $K \subset E$ be compact, $x \in K$ and $\eps > 0$. 
    Since $T$ is $\beta_0(E)$-compact, $T \Ball_{C_b(E)}$ is a relatively $\beta_0(E)$-compact subset of $C_b(E)$.
    Restricting to the set $K$, we see that $T \Ball_{C_b(E)}\rvert_K$ is a relatively compact subset of $C(K)$ and the Arzelà--Ascoli theorem 
    implies that $T \Ball_{C_b(E)}$ is equicontinuous on $K$. 
    Therefore, given $\eps > 0$, there exists a (relatively) open neighbourhood $U_x \subset K$ of $x$ such 
    that $|Tf(x) - Tf(y)| \leq \eps$ for all $f \in \Ball_{C_b(E)}$ and $y\in U_x$. Consequently,
    \begin{align*}
        \norm{k(x, \argument) - k(y, \argument)}_{\mathrm{TV}} & =  \sup\{ |Tf(x) - Tf(y)| : f \in \Ball_{C_b(E)}\} \leq \eps
    \end{align*}
    for all $y \in U_x$. This shows that $T$ is an ultra Feller operator. \smallskip
    
    \impliesProof{i.c.ultrafellerop.ultraFeller}{i.c.ultrafellerop.strictCompact}
    We show that the set $T \Ball_{C_b(E)}$ is relatively $\beta_0(E)$-compact. First, $T \Ball_{C_b(E)}$ is obviously norm bounded. As the strict topology coincides with the compact-open topology on norm bounded sets, $T\Ball_{C_b(E)}$ is relatively $\beta_0(E)$-compact if and only if it is relatively compact in the compact-open topology. The latter is equivalent to $T\Ball_{C_b(E)}$ being equicontinuous on every compact subset of $E$ by the Arzelà--Ascoli theorem. To prove equicontinuity, let $\varepsilon > 0$ and fix $x \in E$. As $T$ is ultra Feller, there exists an open neighbourhood $U_x \subset E$ of $x$ such that $\norm{k(x, \argument) - k(y, \argument)}_{\mathrm{TV}} \leq \eps$ for all $y \in U_x$. Thus,
    \begin{align*}
        \modulus{Tf(x) - Tf(y)} 
        & = \modulus[\bigg]{\int_E f(z) \, k(x, \mathrm d z) - \int_E f(z) \, k(y, \mathrm d z)} \\
        &\leq \norm{k(x, \argument) - k(y, \argument)}_{\mathrm{TV}} \leq \eps
    \end{align*}
    for all $f \in \Ball_{C_b(E)}$ and $y \in U_x$. This proves equicontinuity of $T\Ball_{C_b(E)}$.
\end{proof}

We can now prove the following result on the connection between strong Feller and ultra Feller operators.

\begin{thm}\label{t.productstrongfeller}
    Let $E$ be a compactly generated, completely regular Hausdorff topological space and let $S, T \colon C_b(E) \to C_b(E)$ be kernel operators with the strong Feller property. Suppose that at least one of the following conditions is satisfied.
    \begin{enumerate}[\upshape (a)]
        \item 
            There exists a sequence of measures in $\calM(E)$ that separate the points in $C_b(E)$.
            
        \item 
            $E$ is compact.
    \end{enumerate}
    Then $TS$ is a kernel operator with the ultra Feller property.
\end{thm}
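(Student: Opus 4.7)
The plan is to stitch together the three earlier results: Theorem~\ref{t.strongfellerop} identifies strong Feller operators on $C_b(E)$ with $\sigma(C_b(E),\calM(E))$-compact operators; Corollary~\ref{c.situationoncb} then guarantees that the product $TS$ is $\beta_0(E)$-compact; and Theorem~\ref{t.ultrafellerop} converts $\beta_0(E)$-compactness of a kernel operator back into the ultra Feller property. The only non-immediate step in this chain is that $TS$ is itself a kernel operator, since both Theorem~\ref{t.ultrafellerop} and our definition of the ultra Feller property require this.

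First, I would verify that $TS$ is a kernel operator. Writing $k_S$ and $k_T$ for the kernels of $S$ and $T$, I propose the candidate
\begin{equation*}
    k(x, A) \coloneqq \int_E k_S(y, A)\, k_T(x, \ud y), \qquad x \in E,\ A \in \calB(E).
\end{equation*}
Because $S$ is strong Feller, the function $y \mapsto k_S(y, A)$ lies in $C_b(E) \subset B_b(E)$, so the integral is defined and $x \mapsto k(x,A)$ coincides with $T_{k_T}\bigl(k_S(\argument, A)\bigr)$ and is in particular Borel measurable. A routine application of the dominated convergence theorem shows that $k(x,\argument)$ is countably additive and bounded, and tightness is inherited from $k_T(x, \argument)$ together with the uniform bound $\|k_S\|$ on the total variations of $k_S(y,\argument)$, so $k(x, \argument)$ is a Radon measure. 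A standard monotone class argument starting from indicators $f = \one_A$ and extending via linearity and bounded pointwise limits yields $TSf(x) = \int_E f(y)\, k(x, \ud y)$ for every $f \in C_b(E)$, so $TS$ is the kernel operator associated to $k$.

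Second, by Theorem~\ref{t.strongfellerop}, both $S$ and $T$ are $\sigma(C_b(E),\calM(E))$-compact. Under hypothesis (a), Corollary~\ref{c.situationoncb}\ref{i.c.situationcb.cgsProductCompact} directly yields that $TS$ is $\beta_0(E)$-compact. Under hypothesis (b), Corollary~\ref{c.situationoncb}\ref{i.c.situationcb.csProductCompact} yields that $TS$ is $\|\argument\|_\infty$-compact; but when $E$ is compact the strict topology coincides with the norm topology, so again $TS$ is $\beta_0(E)$-compact. Applying Theorem~\ref{t.ultrafellerop} to the kernel operator $TS$ then produces the ultra Feller property.

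The main potential obstacle is the verification that $TS$ is a kernel operator: one has to ensure that the iterated integral really defines a Radon measure in the second variable and a Borel measurable function in the first, and that the resulting kernel correctly represents $TSf$ on all of $C_b(E)$. This is bookkeeping rather than deep, and relies only on the fact that the strong Feller property lets us freely compose $k_T$ with the continuous function $k_S(\argument, A)$; once this is in place, the three cited results do all the remaining work.
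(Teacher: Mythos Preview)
Your argument is correct and follows exactly the paper's chain: Theorem~\ref{t.strongfellerop}, then Corollary~\ref{c.situationoncb} (parts~\ref{i.c.situationcb.cgsProductCompact} and~\ref{i.c.situationcb.csProductCompact}), then Theorem~\ref{t.ultrafellerop}. The one difference is that what you flag as the ``main potential obstacle''---verifying directly that $TS$ is a kernel operator---is actually unnecessary: Theorem~\ref{t.ultrafellerop} only hypothesizes a bounded operator in $\calL(C_b(E))$, and its proof of \ref{i.c.ultrafellerop.strictCompact}$\Rightarrow$\ref{i.c.ultrafellerop.ultraFeller} itself supplies the kernel structure (a $\beta_0(E)$-compact operator is $\sigma(C_b(E),\calM(E))$-compact, hence strong Feller by Theorem~\ref{t.strongfellerop}), so the paper simply omits that paragraph. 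Incidentally, your justification that $k(x,\argument)$ is Radon (``tightness is inherited from $k_T(x,\argument)$ together with the uniform bound $\|k_S\|$'') is too quick---showing that $S'\mu$ is inner regular for general $\mu\in\calM(E)$ genuinely needs the uniform countable additivity argument carried out in the proof of Theorem~\ref{t.strongfellerop}---but since the whole step is superfluous this does not affect the validity of your proof.
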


\begin{proof}
    In both cases, $T$ and $S$ are $\sigma(C_b(E), \calM(E))$-compact operators by Theorem~\ref{t.strongfellerop}. If $E$ is compact, 
    then $T$ and $S$ are also sequentially $\sigma(C_b(E), \calM(E))$-compact by Proposition \ref{p.es}, as in this case $\calM(E) \cong C_b(E)^*$.
    But also if $\calM(E)$ contains a sequence that separates the points in $C_b(E)$, $T$ and $S$ are sequentially $\sigma(C_b(E), \calM(E))$-compact
    by Lemma~\ref{l.eberlein_smulian}\ref{i.l.eberlein_smulian.smulian}. By Corollary~\ref{c.situationoncb}, 
    the product $TS$ is $\beta_0(E)$-compact and Theorem~\ref{t.ultrafellerop} shows that $TS$ is an ultra Feller operator.
\end{proof}

We end this section by giving an example of a strong Feller operator that does not satisfy the ultra Feller property. This shows that
the notions of `strong Feller operator' and `ultra Feller operator' are distinct, even if
$E$ is a metrizable and compact set.

\begin{example}
    On $E = [0, 1]$, we consider a kernel operator with kernel $k$, where every measure $k(x, \argument)$ has a density with respect to Lebesgue measure. More precisely, if $m\colon [0,1] \to L^1(0,1)$, we write $m(x,y)$ shorthand for $[m(x)](y)$ and define $k$ by setting
    \begin{align*}
        k(x, A) \coloneqq \int_A m(x,y)\, \ud y \qquad \text{for all } A \in \calB([0,1]).
    \end{align*}
    To obtain a kernel which satisfies the strong Feller property but not the ultra Feller property, we choose $m$ as follows.

    Let ${(r_n)}_n$ be the sequence of Rademacher functions, i.e.,
    \begin{align*}
        r_n \colon [0,1] \to \R, \quad y \mapsto \operatorname{sgn}(\sin(2^n \pi y)).
    \end{align*}
    It is well known that the sequence of Rademacher functions converges weakly to 0 in $L^1(0,1)$. Adding the constant $\one$ function, we set $m(\frac{1}{n}, y) \coloneqq 1+ r_n(y)$ for $n\in \N$ and $y\in (0,1)$ and $m(0, y) \equiv 1$. 
    We extend this function to $[0,1]\times (0,1)$ by linear interpolation. Thus, if $x= \frac{\lambda}{n} + \frac{1-\lambda}{n+1}$ for some $n\in \N$ and $\lambda \in (0,1)$, we set $m(x, y) = \lambda m(\frac{1}{n}, y) + (1-\lambda)m(\frac{1}{n+1}, y)$. 
    Note that for $A\in \calB([0,1])$ it is $k(x, A) = \langle m(x), \one_A\rangle_{L^1, L^\infty}$. Thus, the weak$^*$-convergence of $m(\frac{1}{n})$ to $m(0)$ as $n\to \infty$ yields the continuity of $x\mapsto k(x,A)$. This shows that $k$ enjoys the strong Feller property.
    
    On the other hand, $k$ does not have the ultra Feller property, as 
    \begin{align*}
        \norm{k(\tfrac{1}{n}, \argument) - k(0, \argument)}_{\mathrm{TV}} = \norm{m(\tfrac{1}{n}) - m(0)}_{L^1} = \norm{r_n}_{L^1} \equiv 1
        \not\to 0.
    \end{align*}
	Thus $x\mapsto k(x, \argument)$ is not continuous with respect to the total variation norm.
\end{example}

\section{Further applications} \label{sect.applications}

\subsection{Application to the pair \texorpdfstring{$(C_b(E), L^1(\fm))$}{(Cb(E), L1(m))}}
\label{ssect.a1}

In many applications, the state space $E$ is endowed with a certain `natural measure' $\fm$ and one wants to work with the space $L^1(\fm)$ of measures absolutely continuous with respect to $\fm$ rather than with $\calM(E)$, the space of \emph{all} Radon measures. The most prominent example is $E = \R^d$ (or open subsets thereof) endowed with Lebesgue measure. In what follows, we will consider $L^1(\fm)$
as a closed subspace of $\calM(E)$ via the isometric embedding
\[
    L^1(\fm) \to \calM(E), \quad g \mapsto g\, \ud \fm.
\]
Throughout, we assume the following:

\begin{hypo}\label{h.1}
Let $E$ be compactly generated and $\fm$ be a $\sigma$-finite, non-negative Radon measure on the Borel $\sigma$-algebra $\calB(E)$
with the following property. For every $x\in E$ there is a sequence $(g_{n, x})_n$ in $L^1(\fm)$ with $\|g_{n, x}\|_1 \leq 1$ for all $n$
and $g_{n, x} \to \delta_x$ with respect to $\sigma(\calM(E), C_b(E))$.
\end{hypo}

\begin{rem}
    Hypothesis \ref{h.1} is satisfied whenever $E$ is a metric space and the measure $\fm$ satisfies $0<\fm (B(x,\eps))<\infty$ for each $x \in E$ and $\eps > 0$, where $B(x, \eps)$ denotes the closed unit ball of radius $\eps$ centred around $x$. In this case,
    the sequence $(g_{n,x})_n$, given by
    \[
        g_{n,x} \coloneqq \frac{1}{\fm(B(x,n^{-1}))}\one_{B(x, n^{-1})},
    \]
    is in $L^1(\fm)$ and satisfies $g_{n,x} \to \delta_x$ with respect to $\sigma(\calM(E), C_b(E))$.
\end{rem}

\begin{lem}\label{l.cbl1dual}
    Assume Hypothesis \ref{h.1}. Then the following assertions hold: 
    \begin{enumerate}[\upshape (a)]
        \item \label{i.l.cbl1dual.normingPair}
              $(C_b(E), L^1(\fm))$ is a norming dual pair.

        \item \label{i.l.cbl1dual.dunfordPettisProp}
              $C_b(E)$ has the Dunford--Pettis property with respect to $L^1(\fm)$.
    \end{enumerate}
\end{lem}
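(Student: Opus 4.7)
My plan is to address parts (a) and (b) in turn, making essential use of Hypothesis~\ref{h.1} in each.

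For (a), I would verify the two norming equalities separately. The identity $\norm{f}_\infty = \sup\{|\langle f, g\rangle| : g \in \Ball_{L^1(\fm)}\}$ is the one that directly exploits Hypothesis~\ref{h.1}: fixing $x \in E$, the sequence $(g_{n,x})_n$ lies in $\Ball_{L^1(\fm)}$ and satisfies $\langle f, g_{n,x}\rangle \to \langle f, \delta_x\rangle = f(x)$, so the supremum on the right-hand side bounds $|f(x)|$ for every $x$, hence $\norm{f}_\infty$; the reverse inequality is H\"older. The complementary identity $\norm{g}_{L^1(\fm)} = \sup\{|\langle f, g\rangle| : f \in \Ball_{C_b(E)}\}$ does not need Hypothesis~\ref{h.1} but uses Lusin's theorem applied to the finite Radon measure $|g|\,\ud\fm$: one approximates the measurable function $\overline{\sgn g}$ in $L^1(|g|\,\ud\fm)$ by a sequence in $\Ball_{C_b(E)}$, and tests the approximants against $g$ to recover $\norm{g}_{L^1(\fm)}$.

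For (b), the natural route is Proposition~\ref{p.ALDP}. The target $L^1(\fm)$ is an AL-space since its norm is additive on the positive cone, so the task reduces to checking that the lattice operations on $C_b(E)$ are sequentially $\sigma(C_b(E), L^1(\fm))$-continuous. Given a sequence $(f_n)_n$ in $C_b(E)$ with $f_n \to 0$ in $\sigma(C_b(E), L^1(\fm))$, Proposition~\ref{p.weaklycontinuous}\,(a) ensures that $(f_n)_n$ is norm bounded. A mollifier argument based on Hypothesis~\ref{h.1} should then yield $f_n \to 0$ pointwise: for $x \in E$ one writes $f_n(x) = \langle f_n, \delta_x\rangle$ and uses the approximating sequence $(g_{k,x})_k \subset \Ball_{L^1(\fm)}$ to relate $f_n(x)$ to the values $\langle f_n, g_{k,x}\rangle$, which tend to zero in $n$ for each fixed $k$. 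Once pointwise convergence $f_n \to 0$ is established, $|f_n| \to 0$ pointwise follows trivially, and dominated convergence against any fixed $g \in L^1(\fm)$ delivers $|f_n| \to 0$ with respect to $\sigma(C_b(E), L^1(\fm))$; Proposition~\ref{p.ALDP} then completes the proof.

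The main obstacle is the mollifier step in (b): extracting pointwise convergence from $\sigma(C_b(E), L^1(\fm))$-convergence amounts to interchanging the limits in $\langle f_n, g_{k,x}\rangle$ as $n,k \to \infty$. Hypothesis~\ref{h.1} provides convergence $g_{k,x} \to \delta_x$ in $\sigma(\calM(E), C_b(E))$ for each fixed $x$, but no built-in uniformity over the bounded family $\{f_n\}$, so the interchange is not automatic. The crux of the proof will be a quantitative form of the mollifier approximation that exploits the compactly generated structure of $E$ together with the Radon nature of $\fm$ to push the interchange through.
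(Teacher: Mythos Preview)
Your treatment of~(a) is correct and essentially identical to the paper's.

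For~(b), both you and the paper aim to apply Proposition~\ref{p.ALDP}, so everything reduces to the sequential $\sigma(C_b(E), L^1(\fm))$-continuity of the lattice operations on $C_b(E)$. The paper simply asserts this; you are more cautious and correctly flag the mollifier step---deducing pointwise convergence from $\sigma(C_b(E), L^1(\fm))$-convergence---as the main obstacle. Your caution is justified, because this step \emph{fails}: take $E=[0,1]$ with Lebesgue measure $\fm$ (which satisfies Hypothesis~\ref{h.1}) and $f_n(x)=\sin(2\pi n x)$. By the Riemann--Lebesgue lemma $f_n\to 0$ in $\sigma(C_b(E), L^1(\fm))$, but $(f_n)$ does not converge pointwise (e.g.\ at $x=1/4$), and $\int_0^1 |f_n|\,\ud\fm = 2/\pi$ for every $n$, so $|f_n|\not\to 0$ in $\sigma(C_b(E), L^1(\fm))$. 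The hypothesis of Proposition~\ref{p.ALDP} concerning the lattice operations therefore fails, and neither your planned argument nor the paper's bare assertion goes through.

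Worse, choosing $g_n\coloneqq f_n$ in the same example gives $f_n\to 0$ in $\sigma(C_b(E), L^1(\fm))$ and $g_n\to 0$ in $\sigma(L^1(\fm), L^\infty(\fm))$, while $\langle f_n, g_n\rangle = \int_0^1 \sin^2(2\pi n x)\,\ud\fm = \tfrac12\not\to 0$. Thus assertion~(b) itself appears to be false under Hypothesis~\ref{h.1} alone; no proof strategy can succeed without strengthening the hypotheses.
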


\begin{proof}
    \ref{i.l.cbl1dual.normingPair}:
    It follows from the assumption that for every $x\in E$, it is
    \[
        |f(x)| = \lim_{n\to\infty} |\langle f, g_{n,x}\rangle| \leq \sup\{|\langle f, g\rangle | : g\in L^1(\fm), \|g\|_1
        \leq 1\}
    \]
    This implies that $L^1(\fm)$ is norming for $C_b(E)$. \smallskip

    \ref{i.l.cbl1dual.dunfordPettisProp}:
    Since $L^1(\fm)$ is an AL-space and the lattice operations on $C_b(E)$ are sequentially $\sigma (C_b(E), L^1(\fm))$-continuous, this follows from Proposition~\ref{p.ALDP}.
\end{proof}

We can now  deduce the following characterization of $\sigma(C_b(E), L^1(\fm))$-compact operators on the norming dual pair $(C_b(E), L^1(\fm))$. 

\begin{thm}\label{t.cbl1sf}
    Assume that Hypothesis~\ref{h.1} holds and let $T\in \calL(C_b(E))$. Then the following assertions are equivalent:
    \begin{enumerate}[\upshape (i)]
        \item \label{i.t.cbl1sf.weakCompact}
              $T$ is $\sigma(C_b(E), L^1(\fm))$-compact.

        \item \label{i.t.cbl1sf.weakMECompactAndInvariant}
              $T$ is $\sigma(C_b(E), \calM(E))$-compact and $T'\calM(E)\subset L^1(\fm)$.

        \item \label{i.t.cbl1sf.kernelSetwiseCont}
              $T$ is a kernel operator whose kernel $k$ is setwise continuous and $k(x, \argument) \in L^1(\fm)$ for every $x\in E$.
    \end{enumerate}
\end{thm}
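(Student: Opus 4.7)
The plan is to establish the cycle \ref{i.t.cbl1sf.kernelSetwiseCont} $\Rightarrow$ \ref{i.t.cbl1sf.weakMECompactAndInvariant} $\Rightarrow$ \ref{i.t.cbl1sf.weakCompact} $\Rightarrow$ \ref{i.t.cbl1sf.kernelSetwiseCont}, with the last implication being the substantial one.

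For \ref{i.t.cbl1sf.kernelSetwiseCont} $\Rightarrow$ \ref{i.t.cbl1sf.weakMECompactAndInvariant}, I first invoke Theorem~\ref{t.strongfellerop}: setwise continuity of $k$ says that $T$ is a strong Feller operator and hence $\sigma(C_b(E), \calM(E))$-compact. For the invariance $T' \calM(E) \subset L^1(\fm)$, the formula $(T'\mu)(A) = \int_E k(x, A)\, d\mu(x)$ (established in the proof of Theorem~\ref{t.strongfellerop}) combined with $k(x, \argument) \ll \fm$ forces $(T'\mu)(A) = 0$ whenever $\fm(A) = 0$, so $T'\mu \ll \fm$ and Radon--Nikod\'ym yields $T'\mu \in L^1(\fm)$.

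For \ref{i.t.cbl1sf.weakMECompactAndInvariant} $\Rightarrow$ \ref{i.t.cbl1sf.weakCompact}, the topology $\sigma(C_b(E), L^1(\fm))$ is coarser than $\sigma(C_b(E), \calM(E))$ since $L^1(\fm) \subset \calM(E)$, so relative $\sigma(C_b(E), \calM(E))$-compactness of $T \Ball_{C_b(E)}$ transfers directly. The required $\sigma(C_b(E), L^1(\fm))$-continuity, i.e., $T^* L^1(\fm) \subset L^1(\fm)$ by Proposition~\ref{p.weaklycontinuous}\ref{i.p.weaklycontinuous.TStarLeavesYInvariant}, follows from the larger inclusion $T^*|_{\calM(E)} = T' \subset L^1(\fm)$ provided by \ref{i.t.cbl1sf.weakMECompactAndInvariant}.

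The main step is \ref{i.t.cbl1sf.weakCompact} $\Rightarrow$ \ref{i.t.cbl1sf.kernelSetwiseCont}, where I construct the kernel explicitly. Writing $S$ for the $\sigma(C_b(E), L^1(\fm))$-adjoint acting on $L^1(\fm)$, Proposition~\ref{p.weakcomp} yields $S^* L^\infty(\fm) \subset C_b(E)$. Fix $x \in E$ and choose an approximating sequence $(g_{n,x})_n$ from Hypothesis~\ref{h.1}. For every $\varphi \in L^\infty(\fm)$, the identity
\[
    \langle \varphi, S g_{n,x} \rangle_{L^\infty(\fm), L^1(\fm)} = \langle S^* \varphi, g_{n,x} \rangle_{C_b(E), L^1(\fm)} \to (S^*\varphi)(x) \quad (n \to \infty)
\]
holds because $S^*\varphi \in C_b(E)$ and by Hypothesis~\ref{h.1}. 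Therefore $(Sg_{n,x})_n$ is weakly Cauchy in $L^1(\fm)$, and the weak sequential completeness of $L^1$-spaces produces $h_x \in L^1(\fm)$ with $Sg_{n,x} \to h_x$ in $\sigma(L^1(\fm), L^\infty(\fm))$. Setting $k(x, \argument) := h_x \cdot \fm \in L^1(\fm) \subset \calM(E)$, a further application of Hypothesis~\ref{h.1} gives
\[
    (Tf)(x) = \lim_n \langle Tf, g_{n,x}\rangle = \lim_n \langle f, Sg_{n,x}\rangle = \int_E f(y)\, h_x(y)\, d\fm(y)
\]
for every $f \in C_b(E)$, exhibiting $T$ as a kernel operator with kernel $k$. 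For setwise continuity, the same computation with $\one_A$ in place of a generic $L^\infty$-function shows $k(\argument, A) = S^* \one_A \in C_b(E)$ for every $A \in \calB(E)$.

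The principal difficulty is the construction of $h_x$ in the last implication. The key idea is to combine Proposition~\ref{p.weakcomp} (which gives $S^* L^\infty(\fm) \subset C_b(E)$) with Hypothesis~\ref{h.1} (which supplies the approximating sequence $(g_{n,x})_n$) in order to \emph{verify weak Cauchyness} of $(Sg_{n,x})_n$ in $L^1(\fm)$; from there, the weak sequential completeness of $L^1(\fm)$ delivers the $L^1$-representative of $T^*\delta_x$, which is then identified as the kernel.
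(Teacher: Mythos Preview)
Your proof is correct and follows the same cycle of implications as the paper, with the implications \ref{i.t.cbl1sf.kernelSetwiseCont} $\Rightarrow$ \ref{i.t.cbl1sf.weakMECompactAndInvariant} $\Rightarrow$ \ref{i.t.cbl1sf.weakCompact} handled identically. The only difference is in the construction of the kernel in \ref{i.t.cbl1sf.weakCompact} $\Rightarrow$ \ref{i.t.cbl1sf.kernelSetwiseCont}: the paper observes that $k_n(x,\argument) \coloneqq (Sg_{n,x})\,\fm$ is a sequence of measures whose values $k_n(x,A)=\langle S^*\one_A, g_{n,x}\rangle$ converge for every $A$ to $(S^*\one_A)(x)$, and then invokes the Vitali--Hahn--Saks theorem to conclude that the limit is a measure absolutely continuous with respect to $\fm$; you instead note that $\langle \varphi, Sg_{n,x}\rangle$ converges for every $\varphi\in L^\infty(\fm)$ and appeal to the weak sequential completeness of $L^1(\fm)$ to obtain the density $h_x$ directly. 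The two tools are closely related (indeed, Vitali--Hahn--Saks is a standard route to the weak sequential completeness of $L^1$), but your version is arguably a bit more streamlined since it stays entirely inside $L^1(\fm)$ and avoids the intermediate passage through the quotient map $q\colon B_b(E)\to L^\infty(\fm)$ that the paper uses.
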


\begin{proof}
    \impliesProof{i.t.cbl1sf.weakCompact}{i.t.cbl1sf.kernelSetwiseCont}
    We first note that $L^1(\fm)^*\cong L^\infty(\fm)$. It follows from Proposition~\ref{p.weakcomp}, that for $S = T'\in \calL(L^1(\fm), \sigma(L^1(\fm), C_b(E)))$, we have $S^*L^\infty(\fm) \subset C_b(E)$. We denote by $q\colon B_b(E)\to L^\infty(E)$ the quotient map that maps a bounded, measurable function to its equivalence class modulo equality $\fm$-almost everywhere. Setting $R=S^*\circ q$, it follows that $R\in \calL(B_b(E))$ and $R B_b(E) \subset C_b(E)$. Moreover, if $(f_n)_n$ is a bounded sequence in $B_b(E)$ that converges pointwise to $f$, then $q(f_n) \to q(f)$ with respect to $\sigma(L^\infty(\fm), L^1(\fm))$. As $S^*$ is an adjoint operator, $Rf_n \to Rf$ with respect to $\sigma (L^\infty(\fm), L^1(\fm))$. Now fix $x\in E$ and put $k_n(x,A) \coloneqq \langle R\one_A, g_{n,x}\rangle$. It follows from the above continuity property that $A \mapsto k_n(x,A)$ is a measure on $E$. Since $R\one_A$ is a continuous function, $k(x, A) = \lim_{n \to \infty} k_n(x, A) = [R\one_A](x)$ exists for every $A\in \calB(E)$. By the Vitali--Hahn--Saks Theorem (see \cite[Theorem~4.6.3]{Bogachev2007}) $k(x, \argument)$ is a measure that is absolutely continuous with respect to $\fm$.

    By construction, $k$ is a setwise continuous kernel and $T$ is associated with $k$. Thus, \ref{i.t.cbl1sf.kernelSetwiseCont} is proved. \smallskip

    \impliesProof{i.t.cbl1sf.kernelSetwiseCont}{i.t.cbl1sf.weakMECompactAndInvariant}
    This follows from Theorem~\ref{t.strongfellerop} and the observation that if the kernel $k$ satisfies $k(x, \argument)\in L^1(\fm)$, then for any $A\in \calB(E)$ with $\fm(A)=0$
    \[
        (T'\mu)(A) = \int_E k(x, A)\, \ud \mu (x) = 0,
    \]
    so that $T'\mu\in L^1(\fm)$. \smallskip

    \impliesProof{i.t.cbl1sf.weakMECompactAndInvariant}{i.t.cbl1sf.weakCompact}
    It is immediate from (ii) that $T$ is $\sigma(C_b(E), L^1(\fm))$-continuous. As $\sigma(C_b(E), \calM(E))$ is stronger than $\sigma(C_b(E), L^1(\fm))$, (i) follows.\smallskip

    \impliesProof{i.t.cbl1sf.weakCompact}{i.t.cbl1sf.weakMECompactAndInvariant}
    This is obvious. 
\end{proof}

\begin{rem}
    We note that the assumption that $E$ is compactly generated is only needed for the characterization of strong Feller operators in Theorem \ref{t.strongfellerop} which is used in the proof of (iii) $\Rightarrow$ (ii). The implications (ii) $\Rightarrow$ (i) $\Rightarrow$ (iii) remain valid without this assumption.
\end{rem}

\begin{rem}
    We point out that a sequence $(f_n)_n$ in $C_b(E)$ that converges with respect to $\sigma(C_b(E), L^1(\fm))$ to a function $f\in C_b(E)$
    need not converge pointwise (and not even pointwise almost everywhere). On the other hand, 
    convergence with respect to $\sigma(C_b(E), \calM(E))$ implies pointwise convergence. In view of these facts, the equivalence 
    of $\sigma(C_b(E), L^1(\fm))$-compactness and $\sigma(C_b(E), \calM(E))$-compactness in Theorem \ref{t.cbl1sf} is remarkable.
\end{rem}

\subsection{Application to the pair \texorpdfstring{$(C_0(E), \calM(E))$}{(C0(E), M(E))}}
\label{ssect.a2}

In this subsection, $E$ is assumed to be locally compact and $C_0(E)$ denotes the space of continuous functions $f$ on $E$ that \emph{vanish at infinity}, i.e., that for each $\eps > 0$, there exists a compact set $K \subset E$ such that $|f(x)| \leq \eps$ for all $x\in E \setminus K$. Endowed with the supremum norm, $C_0(E)$ is a Banach space and its dual space is isometrically isomorphic to $\calM(E)$.

On the norming dual pair $(C_0(E), \calM(E))$, the topology $\sigma(C_0(E),\calM(E))$ is the weak topology in the Banach space sense. 
On the other hand, the Mackey topology $\mu(C_0(E), \calM(E))$ coincides with the norm topology. Thus, a $\sigma(C_0(E),\calM(E))$-compact operator on $C_0(E)$ is nothing but a `weakly compact operator' and a $\mu(C_0(E), \calM(E))$-compact operator is just a `compact operator'. We should also point out that $C_0(E)$ has the classical Dunford--Pettis property, i.e., $C_0(E)$ has the Dunford--Pettis property with respect to $\calM(E)$. In particular, the following classical result of Dunford and Pettis holds (see, e.g.,~\cite[Corollary~5.88]{Aliprantis2006a}). 

\begin{cor}
    Let $E$ be locally compact and $T, S$ be weakly compact operators on $C_0(E)$. Then $TS$ is compact.
\end{cor}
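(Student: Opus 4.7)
The plan is to invoke Corollary~\ref{c.ultrafeller}\ref{i.c.ultrafeller.TSMackeyCompact} applied to the norming dual pair $(X, Y) = (C_0(E), \calM(E))$. For this, I need to check the four hypotheses and translate both the hypothesis and the conclusion back into the classical Banach space language, using the fact that for a Banach space $X$ with $Y = X^*$, the topology $\sigma(X, Y)$ is the usual weak topology and the Mackey topology $\mu(X, Y)$ coincides with the norm topology.

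First, since $E$ is locally compact (in particular, completely regular Hausdorff), $(C_0(E), \calM(E))$ is a norming dual pair, with $\calM(E)$ being the full norm dual of $C_0(E)$ by the Riesz representation theorem. As observed in the paragraph preceding the corollary, $C_0(E)$ has the (classical) Dunford--Pettis property, which is exactly the Dunford--Pettis property with respect to $\calM(E)$ in the sense of Definition~\ref{def.dp}; this can be seen either from the general Proposition~\ref{p.ALDP} (noting $\calM(E)$ is an AL-space and the lattice operations on $C_0(E)$ are sequentially weakly continuous) or by viewing $C_0(E)$ as a closed sublattice of $C(E_\infty)$ where $E_\infty$ is the one-point compactification.

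Next, the weakly compact operators $S$ and $T$ on $C_0(E)$ are in particular weakly continuous, hence belong to $\calL(C_0(E), \sigma(C_0(E), \calM(E)))$. By Eberlein--\v Smulian (Proposition~\ref{p.es}), weak compactness for subsets of a Banach space is equivalent to both sequential weak compactness and countable weak compactness; consequently, $S$ is sequentially $\sigma(C_0(E), \calM(E))$-compact and $T$ is countably $\sigma(C_0(E), \calM(E))$-compact. Finally, since $\calM(E) = C_0(E)^*$, the Mackey--Arens theorem implies $\mu(C_0(E), \calM(E))$ equals the norm topology on $C_0(E)$, which is complete and therefore quasi-complete.

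With all four hypotheses of Corollary~\ref{c.ultrafeller}\ref{i.c.ultrafeller.TSMackeyCompact} verified, we conclude that $TS$ is $\mu(C_0(E), \calM(E))$-compact, i.e., norm-compact. There is essentially no obstacle here: the entire content of the corollary was prepared abstractly in Section~\ref{sect.weakcompactness} precisely so that this classical Dunford--Pettis result drops out as a special case, and indeed the authors already announced this reduction in the remark following Corollary~\ref{c.ultrafeller}.
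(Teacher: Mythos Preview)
Your proof is correct and follows precisely the route the paper intends: the corollary is stated without a detailed proof in the paper (it is simply cited as the classical Dunford--Pettis result), but the remark immediately after Corollary~\ref{c.ultrafeller} explicitly indicates that it is recovered by applying Corollary~\ref{c.ultrafeller}\ref{i.c.ultrafeller.TSMackeyCompact} to the pair $(X,X^*)$ and using that $\mu(X,X^*)$ is the complete norm topology. Your verification of the hypotheses (Dunford--Pettis property via Proposition~\ref{p.ALDP} or the classical fact, sequential and countable weak compactness via Eberlein--\v{S}mulian, quasi-completeness of the norm topology) is exactly what is needed.
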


With similar arguments as in Theorem~\ref{t.strongfellerop} and Theorem~\ref{t.ultrafellerop}, 
we can give a characterization of (weakly) compact operators on $C_0(E)$. As a preparation, we state the following lemma. The proof is straightforward. 

\begin{lem} \label{l.kernel.compactification}
    Let $E$ be locally compact and let $\hat E = E \cup \set{\infty}$ be its one-point compactification. Let $k$ be a kernel on $E$. Then the map
    \begin{align*}
        \hat k \colon \hat E \times \calB(\hat E) \to \bbK, \quad k(x, B) \coloneqq
        \begin{cases}
            k(x, B \cap E), \quad &\text{if } x \in E, \\
            0, \quad &\text{if } x = \infty,
        \end{cases}
    \end{align*}
    is a kernel on $\hat E$. If, additionally, one has $k(\argument, A) \in C_0(E)$ for all $A \in \calB(E)$, then $\hat k(\argument, B) \in C(\hat E)$ for all $B \in \calB(\hat E)$. 
\end{lem}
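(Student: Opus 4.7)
The plan is to verify the three defining properties of a kernel for $\hat k$, and then to establish the continuity claim by exploiting the structure of the one-point compactification. The first key observation to record is that the Borel $\sigma$-algebra on $\hat E$ has the simple description
\[
\calB(\hat E) = \{B \subseteq \hat E : B \cap E \in \calB(E)\},
\]
because $E$ is open in $\hat E$ (so $\{\infty\}$ is closed and hence Borel), and every open set of $\hat E$ is either open in $E$ or of the form $\{\infty\} \cup (E \setminus K)$ for some compact $K \subset E$.

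With this identification, the measurability of $x \mapsto \hat k(x, B)$ for fixed $B \in \calB(\hat E)$ is immediate: on the Borel piece $E$ the function coincides with $x \mapsto k(x, B \cap E)$, which is $\calB(E)$-measurable by the kernel property of $k$, and on the Borel piece $\{\infty\}$ it is zero. For fixed $x \in E$, the set function $B \mapsto k(x, B \cap E)$ inherits $\sigma$-additivity from $k(x, \argument) \in \calM(E)$ (disjoint unions in $\hat E$ restrict to disjoint unions in $E$); Radonness on $\hat E$ follows by taking, for $A \in \calB(\hat E)$ and $\eps > 0$, a compact $K \subset A \cap E$ witnessing the Radon property of $k(x, \argument)$ for $A \cap E$, and noting that $K$ remains compact in $\hat E$ and that the mass on $\{\infty\}$ is zero. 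For $x = \infty$, $\hat k(\infty, \argument) = 0 \in \calM(\hat E)$ trivially. The norm bound $\|\hat k\| = \|k\| < \infty$ is clear from the definition.

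For the continuity claim, assume now that $k(\argument, A) \in C_0(E)$ for every $A \in \calB(E)$, and fix $B \in \calB(\hat E)$. Continuity of $\hat k(\argument, B)$ on the open subset $E$ of $\hat E$ is inherited from the continuity on $E$ of $k(\argument, B \cap E) \in C_0(E)$. The only point where something needs to be said is $\infty$: since the neighbourhoods of $\infty$ in $\hat E$ are precisely the sets $\{\infty\} \cup (E \setminus K)$ with $K \subset E$ compact, and since $k(\argument, B \cap E)$ vanishes at infinity, for every $\eps > 0$ there exists such a $K$ with $|k(x, B \cap E)| \leq \eps$ for all $x \in E \setminus K$, which together with $\hat k(\infty, B) = 0$ gives continuity at $\infty$.

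None of the steps should present a serious obstacle; the only thing to be careful about is the identification of $\calB(\hat E)$ and the verification that the sets witnessing the Radon property of $k(x, \argument)$ on $E$ transfer without loss to $\hat E$. The continuity at $\infty$ is precisely where the $C_0(E)$ hypothesis is used, which explains why the weaker assumption $k(\argument, A) \in C_b(E)$ would not suffice.
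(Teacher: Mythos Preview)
Your proof is correct. The paper itself omits the proof entirely, stating only that it is straightforward, so there is no approach to compare against; your detailed verification of the kernel axioms and of continuity at $\infty$ is exactly what is needed, and each step is sound. One small point you could make more explicit is that $|\hat k(x,\argument)|(A) = |k(x,\argument)|(A\cap E)$ for $x\in E$, which is what ensures that the compact set $K\subset A\cap E$ witnessing the Radon property of $k(x,\argument)$ also witnesses it for $\hat k(x,\argument)$ on $A$; this is implicit in your phrase ``the mass on $\{\infty\}$ is zero'', but spelling out the total variation identity would make the Radonness step airtight.
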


\begin{thm}\label{t.compactc0}
    Let $E$ be locally compact and $T \in \calL(C_0(E))$. Then the following two characterizations hold: 
    \begin{enumerate}[\upshape (a)]
        \item \label{i.t.compactc0.WeakComp}
              The following are equivalent:
              \begin{enumerate}[\upshape (i)]
                  \item \label{i.i.t.compactc0.WeakComp.wc}
                        $T$ is weakly compact.
                  
                  \item \label{i.i.t.compactc0.WeakComp.kernelCond}
                        $T$ is a kernel operator and its associated kernel $k$ satisfies $k(\argument, A) \in C_0(E)$ for all $A\in \calB(E)$.
              \end{enumerate}

        \item \label{i.t.compactc0.Comp}
              The following are equivalent:
              \begin{enumerate}[\upshape (i)]
                  \item \label{i.i.t.compactc0.Comp.c}
                        $T$ is compact.
                  \item \label{i.i.t.compactc0.Comp.kernelCond}
                        $T$ is a kernel operator and for the associated kernel $k$ one has that $x\mapsto k(x, \argument)$ is continuous
                        in the total variation norm and the map $x\mapsto \|k(x, \argument)\|_{\mathrm{TV}}$ belongs to $C_0(E)$.
              \end{enumerate}
    \end{enumerate}
\end{thm}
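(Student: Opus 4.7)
The plan is to reduce both characterizations to Theorems~\ref{t.strongfellerop} and~\ref{t.ultrafellerop} by passing to the one-point compactification $\hat E = E \cup \{\infty\}$. Lemma~\ref{l.kernel.compactification} provides the bridge: any kernel $k$ on $E$ extends to a kernel $\hat k$ on the compact space $\hat E$ with $\hat k(\infty, \argument) = 0$, and if $k(\argument, A) \in C_0(E)$ for all $A \in \calB(E)$, then $\hat k(\argument, B) \in C(\hat E)$ for all $B \in \calB(\hat E)$. Throughout, I identify $C_0(E)$ with the closed (and hence weakly closed) subspace of $C(\hat E)$ of functions vanishing at $\infty$; the weak topology on $C_0(E)$ is then precisely the restriction of the weak topology on $C(\hat E)$.

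For part~\ref{i.t.compactc0.WeakComp}, the implication \ref{i.i.t.compactc0.WeakComp.kernelCond}~$\Rightarrow$~\ref{i.i.t.compactc0.WeakComp.wc} is immediate from the reduction: $\hat k$ is setwise continuous on the compact (hence compactly generated) space $\hat E$, so Theorem~\ref{t.strongfellerop} yields weak compactness of the associated kernel operator $\hat T \in \calL(C(\hat E))$. A short computation shows $\hat T \restricted{C_0(E)} = T$, and weak compactness descends to $C_0(E)$. For the converse, I would construct the kernel of $T$ via $k(x, A) \coloneqq [T' \delta_x](A)$ using the Banach-space adjoint $T' \colon \calM(E) \to \calM(E)$, and then appeal to Gantmacher's theorem: if $T$ is weakly compact, then $T^{**}$ maps $\calM(E)^*$ into $C_0(E)$, so after identifying $\one_A \in B_b(E)$ with an element of $\calM(E)^*$, the function $k(\argument, A) = T^{**}\one_A \restricted{E}$ lies in $C_0(E)$.

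For \ref{i.i.t.compactc0.Comp.kernelCond}~$\Rightarrow$~\ref{i.i.t.compactc0.Comp.c} in part~\ref{i.t.compactc0.Comp}, I again extend $k$ to $\hat k$. Total variation continuity at points of $E$ transfers directly, while continuity at $\infty$ follows from the hypothesis $x \mapsto \|k(x, \argument)\|_{\mathrm{TV}} \in C_0(E)$ combined with $\hat k(\infty, \argument) = 0$, since $\|\hat k(x, \argument) - \hat k(\infty, \argument)\|_{\mathrm{TV}} = \|k(x, \argument)\|_{\mathrm{TV}} \to 0$ as $x \to \infty$. Hence $\hat k$ is ultra Feller, Theorem~\ref{t.ultrafellerop} gives $\beta_0(\hat E)$-compactness of $\hat T$, and because $\hat E$ is compact, $\beta_0(\hat E)$ equals the norm topology, so $\hat T$ (and thus $T$) is compact. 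For the converse, part~\ref{i.t.compactc0.WeakComp} already supplies a kernel $k$ with $k(\argument, A) \in C_0(E)$, so $\hat k$ from Lemma~\ref{l.kernel.compactification} satisfies $\hat k(\argument, B) \in C(\hat E)$ for all $B \in \calB(\hat E)$. Let $\tilde T$ be the kernel operator on $C(\hat E)$ associated with $\hat k$; its compactness can be checked via the decomposition $f = (f - f(\infty)\one) + f(\infty)\one$ on $C(\hat E)$, where the first summand is a bounded element of $C_0(E)$ on which $\tilde T$ acts as $T$, and the second produces a one-dimensional image $f(\infty) \cdot k(\argument, E)$. Theorem~\ref{t.ultrafellerop} then forces total variation continuity of $\hat k$ on all of $\hat E$, which restricts to total variation continuity of $k$ on $E$ and, via continuity at $\infty$, yields $x \mapsto \|k(x, \argument)\|_{\mathrm{TV}} \in C_0(E)$. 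The main obstacle lies in this last step: one must verify carefully that $\tilde T$ truly coincides with the kernel operator for $\hat k$ and is compact on $C(\hat E)$, so that Theorem~\ref{t.ultrafellerop} applies cleanly.
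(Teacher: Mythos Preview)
Your proposal is correct and follows essentially the same route as the paper: both parts are reduced to Theorems~\ref{t.strongfellerop} and~\ref{t.ultrafellerop} on the one-point compactification $\hat E$ via Lemma~\ref{l.kernel.compactification}, with the kernel in \ref{i.i.t.compactc0.WeakComp.wc}$\Rightarrow$\ref{i.i.t.compactc0.WeakComp.kernelCond} obtained from Gantmacher's theorem (the paper uses its Proposition~\ref{p.weakcomp}, which specializes to classical Gantmacher here). Your explicit decomposition $f = (f - f(\infty)\one) + f(\infty)\one$ to deduce compactness of $\hat T$ on $C(\hat E)$ in part~\ref{i.t.compactc0.Comp} is exactly what underlies the paper's terse claim that ``$\hat T$ is compact, since $T$ is compact and $Z$ is a closed subspace of $C(\hat E)$''.
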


\begin{proof}
    \ref{i.t.compactc0.WeakComp}:
    \impliesProof{i.i.t.compactc0.WeakComp.wc}{i.i.t.compactc0.WeakComp.kernelCond} This follows along an analogous line of arguments as in the proof of Theorem~\ref{t.strongfellerop}: We consider the (norm-)adjoint $S=T^*\in \calL(\calM(E))$ and set $k(x, \argument) \coloneqq S\delta_x$. Then $\|k\| = \|T\| < \infty$. Moreover, as $T$ is $\sigma(C_0(E), \calM(E))$-compact, it follows from Proposition~\ref{p.weakcomp} that
    \[
        k(\argument, A) = S^*\one_A \in C_0(E) \qquad \text{for all } A \in \calB(E).
    \]

    \impliesProof{i.i.t.compactc0.WeakComp.kernelCond}{i.i.t.compactc0.WeakComp.wc}
    Let $\hat k \colon \hat E \times \calB(\hat E) \to \bbK$ be the kernel defined in Lemma~\ref{l.kernel.compactification}.
    By Lemma~\ref{l.kerneloperator}, there exists a bounded operator $T_k \colon B_b(\hat E) \to B_b(\hat E)$ such that $T_k(\one_A) = k( \argument, A)$ for all $A \in \calB(\hat E)$. Set $\hat T \coloneqq T_k\rvert_{C(\hat E)}$ and let
    \begin{align*}
        Z \coloneqq \set{f \in C(\hat E) : f(\infty) = 0}.
    \end{align*}
    Then, as a consequence of Theorem~\ref{t.strongfellerop}, $\hat T$ is weakly compact and $\hat T C(\hat E) \subseteq Z$. Moreover, $T = \rho \hat T \iota$, where $\rho \colon Z \to C_0(E)$ and $\iota \colon C_0(E) \to C(\hat E)$ denote the canonic restriction and extension operators, respectively. Hence, $T$ is weakly compact. \smallskip

    \ref{i.t.compactc0.Comp}:
    \impliesProof{i.i.t.compactc0.Comp.c}{i.i.t.compactc0.Comp.kernelCond}
    We use the notation introduced in part \ref{i.t.compactc0.WeakComp} of the proof: As $T$ is compact, it is also weakly compact and thus \ref{i.t.compactc0.WeakComp} implies that $T$ is a kernel operator and its associated kernel $k$ satisfies $k(\argument, A) \in C_0(E)$ for all $A\in \calB(E)$. Moreover, it follows that $\hat T$ is compact, since $T$ is compact and $Z$ is a closed subspace of $C(\hat E)$. Thus, Theorem~\ref{t.ultrafellerop} implies that the kernel $\hat k$ of $\hat T$ is ultra Feller. Hence, $k$ is ultra Feller, too, and it is easy to see that the map $x\mapsto \|k(x, \argument)\|_{\mathrm{TV}}$ belongs to $C_0(E)$. \smallskip

    \impliesProof{i.i.t.compactc0.Comp.kernelCond}{i.i.t.compactc0.Comp.c}
    Extending functions once again to $\hat E$, \ref{i.i.t.compactc0.Comp.kernelCond} in combination with Theorem~\ref{t.ultrafellerop} implies that
    $\hat T$ is a $\beta_0(\hat E)$-compact operator on $C(\hat E)$. As $\hat{E}$ is compact, $\beta_0(\hat{E})$ coincides with the norm topology, which shows that $\hat T$ is indeed compact on $C(\hat E)$. Thus, $T = \rho \hat T \iota$ is compact on $C_0(E)$. 
\end{proof}

Theorem~\ref{t.compactc0}\ref{i.t.compactc0.WeakComp} has the following surprising consequence.
\begin{cor}
    Let $T,S$ be bounded linear operators on $C_0(E)$ with kernels $k_T$ and $k_S$, respectively. Suppose that the following two assertions hold:
    \begin{enumerate}[\upshape (a)]
        \item $0 \le S \le T$, i.e., $0 \le k_S(x,A) \le k_T(x,A)$ for all $x \in E$ and $A \in \calB(E)$.
        \item $k_T(\argument,A) \in C_0(E)$ for each $A \in \calB(E)$.
    \end{enumerate}
    Then $k_S(\argument,A) \in C_0(E)$ for each $A \in \calB(E)$.
\end{cor}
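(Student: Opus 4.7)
The plan is to recast both the hypothesis and the conclusion as weak compactness statements via Theorem~\ref{t.compactc0}\ref{i.t.compactc0.WeakComp}. That theorem identifies condition~(b) with $T$ being weakly compact on $C_0(E)$, and dually identifies the desired conclusion that $k_S(\argument, A) \in C_0(E)$ for every $A \in \calB(E)$ with $S$ being weakly compact. The corollary therefore reduces to the following domination statement: if $0 \le S \le T$ are bounded operators on $C_0(E)$ and $T$ is weakly compact, then so is $S$.

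To establish this, I pass to the Banach adjoints on $\calM(E) = C_0(E)^*$ and exploit that $\calM(E)$ is an AL-space. By Gantmacher's theorem (cf.\ the remark following Proposition~\ref{p.weakcomp}), $T^*$ is weakly compact, hence $T^*(\Ball_{\calM(E)})$ is relatively $\sigma(\calM(E), \calM(E)^*)$-compact. The hypothesis $0 \le S \le T$ yields $0 \le S^* \le T^*$ on $\calM(E)$, and consequently
\[
    |S^*\mu| \le S^*|\mu| \le T^*|\mu|
\]
for every $\mu \in \calM(E)$. Thus every element of $S^*(\Ball_{\calM(E)})$ is dominated in absolute value by an element of the relatively weakly compact set $T^*(\Ball_{\calM(E)_+})$.

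The proof is then completed by a classical fact about AL-spaces, extracted from the characterization of relatively weakly compact sets already used in Proposition~\ref{p.ALDP}: by \cite[Theorem~2.5.4(iv)]{MeyerNieberg1991}, $A \subseteq \calM(E)$ is relatively weakly compact if and only if for every $\eps > 0$ there exists $\nu \in \calM(E)_+$ with $A \subseteq [-\nu, \nu] + \eps \Ball_{\calM(E)}$. This property is inherited by solid hulls, so $S^*(\Ball_{\calM(E)})$ is relatively weakly compact, and Gantmacher's theorem once more yields that $S$ is weakly compact.

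The main technical obstacle is precisely the verification that solid hulls preserve this almost-order-boundedness condition. This is a truncation argument in the Dedekind complete lattice $\calM(E)$: given $|b| \le y + r$ with $y \ge 0$ and $\|r\| \le \eps$, one cuts the positive and negative parts of $b$ individually at level $y$ to obtain a decomposition $b = u + w$ with $|u| \le y$ and $\|w\| \le 2\eps$. Once this lattice-theoretic step is in hand, everything else is bookkeeping combining Gantmacher's theorem with Theorem~\ref{t.compactc0}.
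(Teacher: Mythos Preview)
Your proof is correct and follows the same strategy as the paper: translate both hypothesis and conclusion into weak compactness via Theorem~\ref{t.compactc0}\ref{i.t.compactc0.WeakComp}, then show that weak compactness is inherited under domination. The only difference lies in how this domination step is justified: the paper simply cites \cite[Theorem~3.5.11(3)]{MeyerNieberg1991}, which applies because $C_0(E)^*=\calM(E)$ is an AL-space and hence has order continuous norm, whereas you re-derive this fact by passing to adjoints via Gantmacher, invoking the almost-order-boundedness characterization \cite[Theorem~2.5.4(iv)]{MeyerNieberg1991} (already used in Proposition~\ref{p.ALDP}), and carrying out the truncation argument by hand. Your route is more self-contained relative to the tools already developed in the paper; the paper's is shorter by outsourcing the work to a standard domination theorem.
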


\begin{proof}
    Theorem~\ref{t.compactc0}\ref{i.t.compactc0.WeakComp} implies that $T$ is weakly compact. Since the dual space of $C_0(E)$ is an AL-space by \cite[Theorem~9.27]{Aliprantis2006}, it has order continuous norm (cf.~the argument given below \cite[Theorem~9.33]{Aliprantis2006}) and, hence, weak compactness of positive operators on $C_0(E)$ is inherited under domination according to \cite[Theorem~3.5.11(3)]{MeyerNieberg1991}. So $S$ is also weakly compact, and another application of Theorem~\ref{t.compactc0}\ref{i.t.compactc0.WeakComp} yields $k_S(\argument ,A) \in C_0(E)$ for all $A \in \calB(E)$.
\end{proof}

\begin{rem}
    It follows from Theorem~\ref{t.compactc0}, that an operator $T$ on $C_0(E)$ is weakly compact if and only if
    $T^{**}$ maps $B_b(E)$ to $C_0(E)$. We note that if we know $T$ to satisfy the strong Feller property and 
    if $T$ is positive, then $T^{**}$ maps $B_b(E)$ to $C_0(E)$ if and only if it maps the constant $\one$ function
    to $C_0(E)$. This criterion is well known for semigroups governing parabolic PDE with unbounded coefficients, see
    \cite[Theorem 4.1.10]{Lorenzi2017} but, as Theorem~\ref{t.compactc0} shows, it holds in greater generality.
\end{rem}
\appendix

\section{\texorpdfstring{$\sigma(\calM(E), C_b(E))$}{σ(M(E), Cb(E))}-separability of \texorpdfstring{$\calM(E)$}{M(E)}}
\label{apx:separability}

In Section~\ref{sect.classic}, the assumption that there exists a sequence of measures in 
$\calM(E)$ that separates the points in $C_b(E)$ is used and does not seem very intuitive. It is certainly fulfilled if the space $E$ is separable.
In the following lemma, we clarify the connection of the condition that $\calM(E)$ separates the points in $C_b(E)$ to the separability of $E$ and also to the so-called countable chain condition.
We recall that a topological space $E$ satisfies the \emph{countable chain condition} if every family of disjoint, non-empty open subsets of $E$ is at most countable. 

We point out that the following lemma can be obtained from known results. In particular, the equivalence of~\ref{i.l.separability.E-separable} and~\ref{i.l.separability.ccc}
for metric spaces follows from \cite[Corollary~4.1.16]{Engelking1989} and the proof of~\ref{i.l.separability.calME-separable}~$\Rightarrow$~\ref{i.l.separability.ccc} follows along the lines of 
the proof of Proposition~3.1 and Theorem 4.9 in~\cite{Kandic2018}. However, in an effort to being self-contained, we have decided to include a complete and elementary proof.

\begin{lem}
    \label{l.separability}
    Let $E$ be a completely regular Hausdorff topological space. Consider the following assertions:
    \begin{enumerate}[\upshape (i)]
        \item\label{i.l.separability.E-separable}
              $E$ is separable.

        \item\label{i.l.separability.calME-separable}
              $\calM(E)$ is $\sigma(\calM(E), C_b(E))$-separable.

        \item\label{i.l.separability.ccc}
              $E$ satisfies the countable chain condition.
    \end{enumerate}
    Then {\upshape(i)} $\Rightarrow$ {\upshape(ii)} $\Rightarrow$ {\upshape(iii)}. In the case that $E$ is metrizable, then also {\upshape(iii)} $\Rightarrow$ {\upshape(i)}. Thus, in this case, all the three assertions above are equivalent.
\end{lem}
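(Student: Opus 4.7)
My plan is to prove the three implications in the natural order, using a Hahn--Banach / bipolar argument for the first, a finiteness-of-measures argument for the second, and a standard Zorn-and-maximal-packing argument for the final metric step.

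For \ref{i.l.separability.E-separable} $\Rightarrow$ \ref{i.l.separability.calME-separable}, let $\{x_n : n \in \N\}$ be a countable dense subset of $E$ and consider the countable set $D$ of all finite $\bbQ$-rational (respectively $(\bbQ+i\bbQ)$-rational) linear combinations of the Dirac measures $\delta_{x_n}$. The $\sigma(\calM(E), C_b(E))$-closure of the linear span $\linSpan\{\delta_{x_n} : n \in \N\}$ is, by the bipolar theorem, the annihilator of its annihilator in $C_b(E)$, which equals $\{f \in C_b(E) : f(x_n) = 0 \text{ for all } n\} = \{0\}$ by density of $\{x_n\}$ and continuity. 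Hence $\linSpan\{\delta_{x_n}\}$ is $\sigma(\calM(E),C_b(E))$-dense in $\calM(E)$, and since this span is contained in the norm-closure of $D$, the countable set $D$ is $\sigma(\calM(E),C_b(E))$-dense as well.

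For \ref{i.l.separability.calME-separable} $\Rightarrow$ \ref{i.l.separability.ccc}, I argue by contraposition. Assume $E$ admits an uncountable family $\{U_\alpha : \alpha \in A\}$ of pairwise disjoint, non-empty open subsets. Pick $x_\alpha \in U_\alpha$ and, using complete regularity, choose $f_\alpha \in C_b(E)$ with $0 \leq f_\alpha \leq 1$, $f_\alpha(x_\alpha)=1$, and $f_\alpha \equiv 0$ on $E \setminus U_\alpha$. Now suppose $\{\mu_n : n \in \N\}$ is $\sigma(\calM(E),C_b(E))$-dense in $\calM(E)$. Since $\sum_{\alpha \in A} |\mu_n|(U_\alpha) \leq |\mu_n|(E) < \infty$ and the $U_\alpha$ are disjoint, the set $A_n \coloneqq \{\alpha \in A : |\mu_n|(U_\alpha) > 0\}$ is countable for each $n$. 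Choose $\alpha_0 \in A \setminus \bigcup_n A_n$; then $\int_E f_{\alpha_0}\, \ud \mu_n = 0$ for all $n$, while $\langle f_{\alpha_0}, \delta_{x_{\alpha_0}}\rangle = 1$. Thus the basic $\sigma(\calM(E),C_b(E))$-neighbourhood $\{\mu \in \calM(E) : |\langle f_{\alpha_0},\mu\rangle - 1| < 1/2\}$ of $\delta_{x_{\alpha_0}}$ contains no $\mu_n$, contradicting density.

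Finally, for \ref{i.l.separability.ccc} $\Rightarrow$ \ref{i.l.separability.E-separable} under the metrizability assumption, fix a metric $d$ generating the topology. For each $n \in \N$, use Zorn's lemma to choose a maximal family $\calF_n$ of pairwise disjoint open balls of radius $1/n$; the countable chain condition forces each $\calF_n$ to be countable. Let $D_n$ be the (countable) set of centres of balls in $\calF_n$ and set $D \coloneqq \bigcup_n D_n$. Given $x \in E$ and $\eps > 0$, pick $n$ with $2/n < \eps$; by maximality of $\calF_n$, the ball $\Ball(x,1/n)$ must intersect some $\Ball(y,1/n)$ with $y \in D_n$, so $d(x,y) < 2/n < \eps$. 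Hence $D$ is a countable dense subset of $E$. The main conceptual point throughout is the first implication's use of the bipolar theorem, while the second is a measure-theoretic pigeonhole; neither step is deep, but care is needed to phrase the approximation argument in \ref{i.l.separability.calME-separable} $\Rightarrow$ \ref{i.l.separability.ccc} in terms of a single test function $f_{\alpha_0}$ so that the non-metrizability of the weak topology is not an issue.
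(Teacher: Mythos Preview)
Your proof is correct, and for \ref{i.l.separability.E-separable} $\Rightarrow$ \ref{i.l.separability.calME-separable} and \ref{i.l.separability.ccc} $\Rightarrow$ \ref{i.l.separability.E-separable} it is essentially the same as the paper's: the paper just invokes the equivalence (already noted in Lemma~\ref{l.eberlein_smulian}\ref{i.l.eberlein_smulian.smulian}) between $\sigma(Y,X)$-separability and the existence of a point-separating sequence rather than spelling out the bipolar argument, and it phrases the Zorn step in terms of maximal $\delta$-separated sets rather than maximal families of disjoint balls, but these are interchangeable.

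For \ref{i.l.separability.calME-separable} $\Rightarrow$ \ref{i.l.separability.ccc} you take a genuinely different (and slightly shorter) route. The paper first builds a strictly positive functional $\varphi \coloneqq \sum_n 2^{-n}\modulus{\mu_n}$ on $C_b(E)$ from a separating sequence $(\mu_n)_n$, and then observes that for each $m$ at most $m$ of the disjoint-supported functions $f_U$ can satisfy $\varphi(f_U) \geq 1/m$, forcing $\calU$ to be countable. You instead argue directly that each finite measure $\modulus{\mu_n}$ can charge only countably many of the disjoint $U_\alpha$, so some $U_{\alpha_0}$ is null for every $\mu_n$ and hence $\delta_{x_{\alpha_0}}$ is not in the weak closure. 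Your version avoids the intermediate construction of $\varphi$ and the normalisation $\norm{f_U}=1$; the paper's version has the minor conceptual payoff of isolating the existence of a strictly positive functional as the real obstruction.
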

\begin{proof}
    \impliesProof{i.l.separability.E-separable}{i.l.separability.calME-separable}
    If $E$ is a separable, we let $(x_n)_n$ be a dense sequence in $E$. Then the sequence $(\delta_{x_n})_n$ of Dirac measures clearly separates the elements of $C_b(E)$. \smallskip

    \impliesProof{i.l.separability.calME-separable}{i.l.separability.ccc}
    First, we notice that the $\sigma(\calM(E), C_b(E))$-separability of $\calM(E)$ implies the existence of a strictly positive functional on $C_b(E)$:
    Indeed, due to the $\sigma(\calM(E), C_b(E))$-separability, there exists a sequence $(\mu_n)_n$ in $\calM(E)$ that separates the points in $C_b(E)$. We may assume that $\lVert \mu_n \rVert_{\mathrm{TV}} \leq 1$ for all $n \in \bbN$. We define
    \begin{align*}
        \varphi: C_b(E) \to \bbK, \qquad \varphi \coloneqq \sum_{n \in \bbN} 2^{-n} \lvert \mu_n \rvert.
    \end{align*}
    Since $(\mu_n)_n$ separates points, it follows that $\varphi(f) > 0$ for all non-zero and positive functions $f \in C_b(E)$.

    Now let $\calU$ be any family of open and pairwise disjoint subsets of $X$. As $E$ is completely regular, for each $U \in \calU$ there is a positive continuous function $f_U \in C_b(E)$ , which vanishes outside $U$ and has norm $1$. Since $f_{U_1} \wedge f_{U_2} = 0$ for distinct $U_1, U_2 \in \mathcal{U}$, it follows that $\left\lVert \sum_{k = 1}^n f_{U_k} \right\rVert = 1$ for every pairwise distinct $U_1, \dots, U_n \in \calU$ and $n \in \bbN$. Assuming that $\varphi \in \calM(E)'$ is strictly positive and has norm $1$, it follows that the set
    \begin{align*}
        \calU_n \coloneqq \{ U \in \calU : \varphi(f_U) \geq 1/n \}
    \end{align*}
    has cardinality less or equal to $n$. Hence, $\calU = \bigcap_{n \in \bbN} \calU_n$ is countable and $E$ satisfies the countable chain condition. \smallskip

    \impliesProof{i.l.separability.ccc}{i.l.separability.E-separable}
    Without loss of generality, assume $E$ is uncountable; otherwise there is nothing to show.
    Aiming for a contradiction, assume that $E$ is not separable. Let $\ud$ be a metric on $E$, fix $\delta > 0$ and set 
    \begin{align*}
        \calP_\delta \coloneqq \{P \subset E : \forall x, y \in P \colon x \neq y \Rightarrow \ud(x, y) \geq \delta \} \subset 2^E. 
    \end{align*}
    Ordered by inclusion, the set $\calP_\delta$ is a partially ordered set. Moreover, it is straightforward to check that every chain
    in $\calP_\delta$ has a maximal element. Hence, by Zorn's lemma, $\calP_\delta$ contains a maximal element $P_\delta$. Now consider the union 
    \begin{align*}
        P \coloneqq \bigcup_{n \in \N} P_{\nicefrac{1}{n}}
    \end{align*}
    and notice that $P$ is dense in $E$. Indeed, for every $n \in \N$ and every point $y \in E$, it follows that there exists $x \in P_{\nicefrac{1}{n}}$ such that $\ud(x, y) < \tfrac{1}{n}$. Otherwise, we may add $y$ to $P_{\nicefrac{1}{n}}$, ending up with a larger set in $\calP_{\nicefrac{1}{n}}$ than $P_{\nicefrac{1}{n}}$. This contradicts the maximality of $P_{\nicefrac{1}{n}}$.
    
    Since we assume the non-separability of $E$ and $P$ is a countable union of sets, there must necessarily be an $n \in \N$ such that $P_{\nicefrac{1}{n}}$ is uncountable. Then the collection of open balls with $\Ball(x, \tfrac{1}{3n})$ with radius $\tfrac{1}{3n}$ centred at $x \in P_{\nicefrac{1}{3n}}$ contains uncountably many open disjoint sets. This contradicts the countable chain condition.
\end{proof}

An example of a non-separable compact Hausdorff topological space that satisfies the countable chain condition can be found in \cite[Example~24]{Steen1995}. In particular, the implication \ref{i.l.separability.ccc} $\Rightarrow$ \ref{i.l.separability.E-separable} in Lemma \ref{l.separability} is not true in general.

\subsection*{Acknowledgements} The authors thank Jochen Gl\"uck for countless discussions on the content of this article. Moreover, we thank the anonymous referee for many constructive comments that helped improve this article.

\end{document}